\def\squareforqed{\hbox{\rlap{$\sqcap$}$\sqcup$}}
\def\qed{\ifmmode\squareforqed\else{\unskip\nobreak\hfil
\penalty50\hskip1em\null\nobreak\hfil\squareforqed
\parfillskip=0pt\finalhyphendemerits=0\endgraf}\fi\medskip}
\newcommand{\udot}{{}^{\textstyle .}}
\newcommand{\PSL}{\mathrm{PSL}}
\newcommand{\PGL}{\mathrm{PGL}}
\newcommand{\Mat}{\mathrm{M}}
\newcommand{\Sz}{\mathrm{Sz}}
\newcommand{\Co}{\mathrm{Co}}
\newcommand{\PSU}{\mathrm{PSU}}
\newcommand{\Or}{\mathrm{P\Omega}}
\newcommand{\He}{\mathrm{He}}
\newcommand{\MM}{\mathbb{M}}
\newtheorem{theorem}{Theorem}
\newtheorem{example}{Example}
\newtheorem{lemma}{Lemma}
\title{There is no $\Sz(8)$ in the Monster}
\date{Version of 18th August, 2015}
\author{Robert A. Wilson}
\address{School of Mathematical Sciences\\
Queen Mary University of London\\
London E1 4NS\\U.K.}
\email{R.A.Wilson@qmul.ac.uk}
\begin{document}
\maketitle

\begin{abstract}
As a  contribution to an eventual solution 
of the problem of the determination of the maximal subgroups of the Monster
we show that there is no subgroup isomorphic to $\Sz(8)$. The proof is largely,
though not entirely, computer-free.
\end{abstract}

\section{Introduction}
The Fischer--Griess Monster group $\mathbb M$ is the largest of the
$26$ sporadic simple groups, and was first constructed by Griess
\cite{Griess} in 1982. A simplified construction along the same general lines
was given by Conway \cite{Conway}.

One of the major problems in group theory today is that
of classifying the maximal subgroups of the finite simple groups and their automorphism
groups.
Much work has been done over many years attempting to determine the
maximal subgroups of  $\MM$, but it is still the only
sporadic group whose maximal subgroups are not completely
classified (see \cite{FSG} and references therein). 

The maximal $p$-local
subgroups of the Monster were classified in \cite{oddlocals,MSh,Meier}, and much theoretical work on
non-local subgroups was accomplished in \cite {Anatomy1,Anatomy2}.
Following successful computer constructions of the Monster \cite{3loccon,2loccon}
other techniques became available, and further progress was made
\cite{post,A5subs,S4subs,L241,L227,L213B}, including discovery of five previously unknown
maximal subgroups, isomorphic to $\PSL_2(71)$, $\PSL_2(59)$,
$\PSL_2(41)$, $\PGL_2(29)$, $\PGL_2(19)$.
 
The cases left open by this previous work are
possible maximal subgroups with socle isomorphic to one of the following simple groups:
$$\PSL_2(8), \PSL_2(13), \PSL_2(16), \PSU_3(4), \PSU_3(8), \Sz(8).$$
Of these, $\PSL_2(8)$ and $\PSL_2(16)$ have been classified in unpublished work
of P. E. Holmes. The case of $\Sz(8)$ is
particularly interesting because it is not yet known whether $\Sz(8)$ is a subgroup
of the Monster at all.

Throughout this paper, $\mathbb M$ denotes the Monster, and $S$ denotes a subgroup
of $\mathbb M$, isomorphic to $\Sz(8)$. The notation of the {\sc Atlas} 
\cite{Atlas} is generally used for group
names and structures, occasionally replaced by more traditional names as in \cite{FSG}.
 In addition, $B\cong 2^{3+3}{:}7$
denotes the Borel subgroup of $S$. 

The main result of this paper 
is the following.
\begin{theorem}\label{noSz8}
 There is no subgroup isomorphic to $\Sz(8)$ in the Monster sporadic simple group $\mathbb M$.
\end{theorem}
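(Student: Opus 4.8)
The plan is to assume $S\le\MM$ and force a contradiction in three stages: first pin down the $\MM$-classes of the elements of $S$; then locate the Borel subgroup $B$ and the subgroup $13{:}4$ of $S$ inside suitable local subgroups of $\MM$; and finally eliminate the few surviving possibilities.

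\emph{Stage 1 (fusion).} The class of the involutions is forced with no computation at all. The group $\Sz(8)$ has a single class of involutions, and it contains dihedral subgroups $D_{14}$ and $D_{26}$: the first is a maximal subgroup, and the second exists because the Frobenius complement of order $4$ in $N_S(C_{13})=13{:}4$ acts on a generator of $C_{13}$ as $x\mapsto x^{5}$, so its square inverts that generator. Hence $S$ contains pairs of involutions whose product has order $7$, and pairs whose product has order $13$. Since the $2A$-involutions of $\MM$ form a class of $6$-transpositions (the product of two of them has order at most $6$), the involutions of $S$ must lie in class $2B$; consequently every element of order $4$ of $S$ squares into $2B$ and so lies in one of the three $\MM$-classes of order $4$ with that property. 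The remaining ambiguity --- the $\MM$-classes of the elements of orders $4$, $5$, $7$, $13$ --- is then cut down by restricting the rational-valued characters of $\MM$ of degrees $196883$ and $21296876$ (and Brauer characters modulo small primes if needed) to $S$: for each admissible fusion pattern one checks whether the restriction is a genuine character of $\Sz(8)$, i.e.\ whether its inner product with each of the eleven irreducible characters of $\Sz(8)$ is a non-negative integer. I expect this to leave only a handful of patterns, distinguished essentially by whether the elements of orders $7$ and $13$ are of ``$A$'' or ``$B$'' type.

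\emph{Stage 2 (local structure).} Fix a $2B$-involution $t$ in the centre $Z(Q)\cong 2^{3}$ of the normal subgroup $Q=2^{3+3}$ of $B$. Then $C_S(t)=N_S(\langle t\rangle)=Q$ lies in $C:=C_\MM(t)\cong 2^{1+24}.\Co_1$, while the complement $C_7\le B$ permutes the seven involutions of $Z(Q)$, hence their centralisers (all conjugate to $C$), in a single orbit. I would analyse the image of $Q$ under $C\to\Co_1$ and the intersection $Q\cap 2^{1+24}$, carrying along the $C_7$-action, to constrain the embedding of $Q$. In parallel, $13{:}4\le S$ must embed in $N_\MM(\langle 13\rangle)$, which is $(13{:}6\times L_3(3)).2$ if the elements of order $13$ lie in class $13A$, and is contained in the $13$-local subgroup $13^{1+2}{:}(3\times 4S_4)$ if they lie in $13B$; for each case I would record whether a copy of $13{:}4$ fits, with its element of order $13$ self-centralising and its element of order $4$ in the $\MM$-class singled out in Stage 1. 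Finally the normalisers of the elements of orders $5$ and $7$ --- whose $\MM$-centralisers are $5\times\mathrm{HN}$ or a $5$-local, and $7\times\He$ or a $7$-local --- tie each surviving fusion pattern to a fairly rigid local configuration.

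\emph{Stage 3, and the main difficulty.} For most patterns the trace data of Stage 1 together with the incompatibility of the $2$-local and $13$-local constraints of Stage 2 should already give a contradiction. For the last one or two patterns I would pass to an explicit representation of $\MM$ and verify directly that the required generators cannot be found --- concretely, that no element of the prescribed class of order $7$ normalises a subgroup isomorphic to $2^{3+3}$ assembled from elements of the prescribed classes of orders $2$ and $4$, equivalently that the standard presentation of $\Sz(8)$ has no solution with its generators in the designated $\MM$-classes; this final verification is the one genuinely computational step. The main obstacle is Stage 2: the global structure of the simple group $\Sz(8)$ is not visible inside any single local subgroup of $\MM$, so one must patch together constraints coming from the involution centraliser $2^{1+24}.\Co_1$ and from the $13$-, $7$- and $5$-local subgroups, and controlling exactly how the Suzuki $2$-group $2^{3+3}$ --- with its $C_7$ permuting the seven involutions of its centre --- can sit inside $2^{1+24}.\Co_1$ is the delicate point, presumably the reason the proof is ``largely, though not entirely, computer-free''.
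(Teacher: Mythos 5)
Your Stage 1 observation (a single class of involutions in $\Sz(8)$, products of involutions of orders $7$ and $13$, hence $2B$ by the $6$-transposition property of $2A$) is correct and is consistent with the paper, and your identification of the key difficulty --- how $Q=2^{3+3}$ with its $C_7$-action sits inside $2^{1+24}\udot\Co_1$ --- is exactly where the real work lies. But that work is only announced (``I would analyse\dots''), not done, and it is not routine: the paper needs to prove that $\Co_1$ itself contains no $2^{3+3}{:}7$ (so $Z(Q)$ must fall into $2^{1+24}$), that the $7$-element must be in $\Co_1$-class $7B$, and then a classification (orbits of $\PSL_3(2)$ on the $73$ points of a $3$-space over $\FF_8$) showing there are exactly three classes of $2^3{:}7$, two of which are killed by local arguments. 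It also needs a separate argument (via the classification of $2$-local maximal subgroups in \cite{MSh,Meier}) that $B$ lies in $2^{1+24}\udot\Co_1$ or $2^3.2^6.2^{12}.2^{18}.(\PSL_3(2)\times 3S_6)$ at all; your plan takes containment in the involution centralizer for granted only for $Q$, which is fine, but then all constraints have to be assembled there.

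The more serious gap is Stage 3. For the surviving configuration no contradiction can come from fusion or local compatibility alone, because that configuration is genuinely realised: the paper shows the type-(1) $2^3{:}7$ does extend to a copy of $B$ (inside $M_2$); only the extension to $\Sz(8)$ fails, and the obstruction is global. The paper's decisive step has no analogue in your plan: writing $\Sz(8)=\langle 2^3{:}7,\,D_{14}\rangle$ with intersection of order $7$, using the known fact that any $\Sz(8)$ in $\MM$ has trivial centralizer, and then a small orbit computation in $(7\times\He){:}2$ (the $2^6{:}3S_6$ centralizing the $2^3{:}7$ has no regular orbit on the $266560$ relevant $D_{14}$'s) to conclude that every such amalgam has non-trivial centralizer. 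Your proposed substitute --- passing to an explicit representation of $\MM$ and searching directly for generators in prescribed classes satisfying a presentation of $\Sz(8)$ --- is not a feasible ``one computational step'': the smallest faithful representations of $\MM$ have dimension $196882$, and there is no practical way to enumerate the candidate $2^{3+3}$'s or presentation solutions without precisely the kind of reduction to small groups ($\Co_1$, $\Mat_{24}$, $\He$) that constitutes the paper's proof. As it stands the proposal is a programme whose hardest two steps are unproved and whose fallback would not terminate in practice.
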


The structure of the proof is as follows. First we prove the following.
\begin{theorem}
\label{whereisB}
If $B\cong 2^{3+3}{:}7$ is a subgroup of $\mathbb M$ isomorphic to the Borel subgroup
of $\Sz(8)$, then $B$ lies in one of the maximal subgroups
$M_1$ of shape $2^{1+24}\udot\Co_1$ or $M_2$ of shape 
$2^3.2^6.2^{12}.2^{18}.(\PSL_3(2)\times 3S_6)$.
\end{theorem}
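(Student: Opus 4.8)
The plan is to locate $B$ via the characteristic subgroup $Z:=Z(O_2(B))\cong 2^3$. Write $Q:=O_2(B)\cong 2^{3+3}$ and fix an element $s$ of order $7$ in $B$, so that $B=Q{:}\langle s\rangle$. Since $\langle s\rangle$ acts fixed-point-freely on each of $Z$ and $Q/Z$ it acts fixed-point-freely on $Q$; hence $B$ is a Frobenius group with kernel $Q$, every involution of $B$ lies in $Z$, the seven involutions of $Z$ form a single $\langle s\rangle$-orbit (and so are all conjugate in $\mathbb M$), $C_B(z)=Q$ for every $z\in Z^{\#}$, and $Z(B)=1$. As $Z$ is characteristic in $B$, we have $B\le N_{\mathbb M}(Z)$, so it suffices to show that some maximal subgroup of $\mathbb M$ containing $N_{\mathbb M}(Z)$ is a conjugate of $M_1$ or of $M_2$.

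First I would show that the involutions of $Z$ lie in class $2B$; granting this and fixing $z\in Z^{\#}$, we get $Q\le C_{\mathbb M}(z)=2^{1+24}\udot\Co_1$ with $z$ the central involution. Determining how $Q$, and in particular $Z$, meets the normal extraspecial subgroup $2^{1+24}$ and maps to $\Co_1$ — i.e.\ identifying the configuration of $Z$ in the Leech-lattice-mod-$2$ geometry — leaves only a short list of possibilities for the $\mathbb M$-class of $Z$. Running over $z\in Z^{\#}$ and over that list, one finds that either $Z$ is the class of $2B$-pure $2^3$'s whose normalizer is the maximal $2$-local $M_2=2^3.2^6.2^{12}.2^{18}.(\PSL_3(2)\times 3S_6)$ — consistent with our situation, since there $\PSL_3(2)$ induces $\mathrm{GL}(Z)$, so that $M_2$ genuinely contains Frobenius subgroups $2^3{:}7$ — or else $C_{\mathbb M}(B)$ contains a ($2B$-)involution $z_0$, in which case $B\le C_{\mathbb M}(z_0)$, a conjugate of $M_1=2^{1+24}\udot\Co_1$. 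Either way $B$ lies in a conjugate of $M_1$ or $M_2$.

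Two points need genuine work. First, excluding class $2A$: were $z\in Z^{\#}$ a $2A$-element, $Z$ would be a $2A$-pure $2^3$ admitting a transitive $7$, with $Q\cong 2^{3+3}$ sitting above it inside $C_{\mathbb M}(z)=2{\cdot}B$; this must be contradicted using the structure of $2{\cdot}B$ and of $2A$-pure subgroups of small rank — and, if one approaches it through overgroups, by showing that none of $2^2.{}^2E_6(2){:}S_3$, $2^{2+11+22}.(M_{24}\times S_3)$ or $2{\cdot}B$ itself contains a copy of $B$. Secondly — and this is the main obstacle — one must classify the $2B$-pure $2^3$'s carrying a transitive $7$ and pin down their normalizers, which is where the detailed structure of $2^{1+24}\udot\Co_1$, the action of $\Co_1$ on $\Lambda/2\Lambda$, and the list of maximal $2$-local subgroups of $\mathbb M$ all have to be combined; this is the likeliest place for the modest computer calculation mentioned in the abstract to enter.
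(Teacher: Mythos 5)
Your proposal is a strategy sketch rather than a proof: the two steps you yourself flag as ``needing genuine work'' are essentially the entire content of the theorem, and they are not carried out. Concretely, (i) the exclusion of $2A$-involutions from $Z=Z(O_2(B))$ is asserted but not argued (and it is not obviously cheap: it amounts to analysing $2^{3+3}$-subgroups of $2\udot \mathbb B$ sitting over a $2A$-pure $2^3$ with a transitive $7$, i.e.\ precisely the kind of case analysis the theorem is supposed to deliver); and (ii) the dichotomy ``either $Z$ is the $2B$-pure $2^3$ whose normalizer is $M_2$, or $C_{\mathbb M}(B)$ contains an involution'' is exactly the hard classification statement, stated as what ``one finds''. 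There are several Monster-classes of $2B$-pure $2^3$ admitting a transitive $7$ (the paper later exhibits three inside $2^{1+24}$ alone, in Lemma~\ref{lem:inCo1}), and for those not of $M_2$-socle type you would need to produce an involution centralizing all of $B$, not merely the $2^3{:}7$ or the central involution of a $2^{1+24}$ containing $Z$; nothing in the proposal establishes this, and the paper never proves such a dichotomy -- it only shows much later, and by quite different arguments, that the unwanted classes do not extend to $B$ or to $\Sz(8)$ at all.

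The paper's route is different and avoids this classification at the outset: it first shows (using the Meierfrankenfeld--Shpectorov classification of $2$-local maximal subgroups together with a case-by-case sweep of the known non-local and odd-local maximal subgroups) that any $B\cong 2^{3+3}{:}7$ lies in a known $2$-local maximal subgroup, and then, using the fact that $B$ is generated by its elements of order $7$, whittles the list of $2$-local maximal subgroups down to $M_1$ and $M_2$; the only structural argument needed is in the $2^{10}.2^{16}.\Or_{10}^+(2)$ case, where the fixed space of $2^{3+3}$ on the $2^{10}$ is a singular subspace whose radical, as a module for the $7$-element, either is irreducible (forcing $B\le M_2$) or has fixed points (forcing $B\le M_1$). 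If you want to pursue your $N_{\mathbb M}(Z)$ approach, you would still end up importing an equivalent amount of $2$-local information about $\mathbb M$ (the classes of $2B$-pure $2^3$'s and their normalizers), so as it stands the proposal has a genuine gap at both of the points you identified.
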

Then we look more closely at the $M_1$ case and prove the following.
\begin{theorem}
\label{noBinCo1}
The Conway group $\Co_1$ does not contain a subgroup isomorphic to the Borel
subgroup of $\Sz(8)$.
\end{theorem}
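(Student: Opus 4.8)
The plan is to suppose $B=Q{:}\langle t\rangle\le\Co_1$, with $Q\cong 2^{3+3}$ and $|t|=7$, and to force a contradiction. First I record the internal features of $B$ that will be used. Here $Q$ is the Suzuki $2$-group of order $2^6$, so $A:=Z(Q)=Q'=\Phi(Q)$ is elementary abelian of order $8$, every element of $Q\setminus A$ has order $4$ with square in $A\setminus\{1\}$, and $t$ acts fixed-point-freely on $Q$, inducing a Singer cycle on each of the two Galois-twisted $3$-dimensional irreducible $\mathbb{F}_2\langle t\rangle$-modules $A$ and $Q/A$; write $W$ and $W^{\sigma}$ for these two modules. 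In particular $\langle A,t\rangle\cong 2^3{:}7$ is a Frobenius group, so $t$ permutes the seven involutions of $A$ in a single orbit; hence they all lie in one conjugacy class $X$ of involutions of $\Co_1$, $Q\le C_{\Co_1}(A)$, and $t$ normalises $A$ inducing a Singer element of $\Aut(A)\cong L_3(2)$.

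Next I would exploit the $24$-dimensional $\mathbb{F}_2$-module $V=\Lambda/2\Lambda$, on which $\Co_1$ acts faithfully and orthogonally, so $\Co_1\le\mathrm{O}_{24}^+(2)$. For an involution $s\in\mathrm{O}_{24}^+(2)$ one has $(s-1)^2=0$ and $s-1$ is self-adjoint, so $[V,s]=\im(s-1)$ is totally isotropic, whence $\dim[V,s]\le 12$ and $\dim C_V(s)\ge 12$; as the three classes of involutions of $\Co_1$ have fixed spaces of dimensions $8,12,16$ on $V$ (in some order), the class $X$ above cannot be the one with fixed space of dimension $8$. I would also use that $O_2(B)=Q$ acts trivially on every composition factor of $V|_B$, so each composition factor is a $\mathbb{F}_2\langle t\rangle$-module of dimension $1$ or $3$; since $\mathbb Z[\zeta_7]$ is a principal ideal domain and $2$ has order $3$ modulo $7$, restriction of $V$ to $\langle t\rangle$ is exactly $1^{f}\oplus W^{g}\oplus (W^{\sigma})^{g}$ with $f+6g=24$ (and $f\neq 24$ by faithfulness), so $\dim C_V(t)=f\in\{0,6,12,18\}$ is determined by the $\Co_1$-class of $t$; comparing this with the previous paragraph pins down (or at least severely restricts) the classes of $t$ and of the involutions of $A$, together with the possible values of $\dim C_V(A)$ and $\dim C_V(Q)$.

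With those classes fixed, $N:=N_{\Co_1}(A)$ is a $2$-local subgroup and so lies in one of the maximal $2$-local subgroups $2^{11}{:}M_{24}$, $2^{1+8}_+\udot\mathrm{O}_8^+(2)$, $2^{2+12}{:}(A_8\times S_3)$, $2^{4+12}\udot(S_3\times 3S_6)$; using the {\sc Atlas} and the explicit module actions I would identify $C:=C_{\Co_1}(A)$, a Sylow $2$-subgroup $P$ of $C$, and the action of $t$ on $P/A$. The contradiction should then come from the remaining requirement: $P/A$ must contain a $t$-invariant $\bar Q\cong 2^3$ on which $t$ acts as a Singer cycle and whose preimage $Q$ in $C$ is a Suzuki $2$-group, i.e.\ $[Q,Q]=A$ and $x^2\in A\setminus\{1\}$ for every $x\in Q\setminus A$. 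Equivalently, the commutator pairing $Q/A\times Q/A\to A$ and the squaring map $Q/A\to A$ inherited from $C$ must form the non-degenerate alternating and semilinear datum — an ``$\mathbb{F}_8$-Hermitian'' structure compatible with the Singer $7$ — that defines the Suzuki group, and I expect this to be impossible inside each of the relevant $2$-locals.

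The main obstacle is precisely this last step. The Frobenius group $2^3{:}7$, and even various split and non-split extensions of it by a further copy of $2^3$, do embed into these $2$-local subgroups, so the whole point is that the \emph{particular} nonabelian multiplication on $Q$ cannot be realised as a section of $C_{\Co_1}(A)$; I expect that verifying this needs an explicit calculation inside the relevant $2$-local (realising, say, the $11$-dimensional Todd module for $M_{24}$, or the natural $8$-dimensional orthogonal module for $\mathrm{O}_8^+(2)$, by matrices over $\mathbb{F}_2$), which would match the paper's remark that the proof is ``largely, though not entirely, computer-free''. An alternative, more bookkeeping-heavy route is to enumerate the maximal subgroups $M<\Co_1$ with $448\mid|M|$: the product-type ones $(A_k\times H)\udot 2$ are handled because $Q$ would have to lie inside $H$ (otherwise $t$ centralises a nontrivial part of $A_k$ and is not fixed-point-free on $Q$), while $|H|_2<2^6$ or $H$ has no $7$-divisible $2$-local subgroup; and the rest ($\Co_2$, $\Co_3$, $3\udot\Suz{:}2$, $\PSU_6(2){:}S_3$, $2^{11}{:}M_{24}$, $2^{1+8}_+\udot\mathrm{O}_8^+(2)$, $2^{2+12}{:}(A_8\times S_3)$, and a couple more) reduce to the same $2$-local questions inside $M_{24}$, $\mathrm{O}_8^+(2)$, $\Suz$, $U_6(2)$ and $A_8$. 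This works, but is longer than the module-theoretic reduction above.
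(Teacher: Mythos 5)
There is a genuine gap: your argument stops exactly where the theorem actually has to be proved. The structural preliminaries (Suzuki $2$-group structure of $Q$, Frobenius action of $t$, the $\mathbb{F}_2\langle t\rangle$-module bookkeeping on $\Lambda/2\Lambda$) and the reduction of $N_{\Co_1}(A)$ to the maximal $2$-local subgroups of $\Co_1$ are fine and run roughly parallel to the first half of the paper's argument. But the decisive claim --- that the commutator-and-squaring datum of $2^{3+3}{:}7$ cannot be realised as a section of $C_{\Co_1}(A)$ inside any of those $2$-locals --- is only asserted (``I expect this to be impossible''), not proved, and you say yourself that this is the main obstacle. Until that step is carried out, nothing has been shown: the Frobenius group $2^3{:}7$ certainly embeds, and ruling out the specific non-split Suzuki-type extension is precisely the content of the theorem. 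Your alternative route via maximal subgroups of order divisible by $448$ has the same feature: it ends by ``reducing to the same $2$-local questions'', which are left open.

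For comparison, the paper closes this gap by first pinning the configuration down to a single conjugacy class: any such $B$ forces a $2^3$ inside the extraspecial group of $2^{1+8}\udot\Or_8^+(2)$, corresponding to a totally isotropic $3$-space of the orthogonal $8$-space, which is unique up to conjugacy together with its normalising $7$. It then transfers the whole problem into the split extension $2^{11}{:}\Mat_{24}$ (the preimage lies in an octad stabiliser), restricts the $2^{11}$ to the $2^3{:}7$ to get $1a+1a+3a+3a+3b$, analyses which gluings between these factors can occur, quotients by $1a+3b$, and finishes with a small explicit calculation in the resulting group $2^6{:}2^3{:}7$ showing it contains no $2^{3+3}{:}7$. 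Your plan would need an analogous uniqueness statement (otherwise ``each of the relevant $2$-locals'' hides many configurations) and then an explicit extension/gluing computation of this kind; without it the proposal is a programme, not a proof.
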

This reduces the $M_1$ case to a classification of certain subgroups of $2^{1+24}$, which
yields exactly three classes of $2^3$ which might lie in $B$.
We then show that two of the three cases do not 
in fact extend to a copy of $B$.
The other case may extend to $B$, but not to $S$.
First we show that the $M_2$ case reduces to this last $M_1$ case.
Then this possibility is eliminated 
using a small computation of orbits in the Held group
to show that any group generated by the subgroup $2^3{:}7$ and an involution
inverting the $7$-element has non-trivial centralizer.

\section{Locating the Borel subgroup}
\label{where}
In this section, we consider all possibilities for known maximal subgroups of $\mathbb M$
which could contain a subgroup $B\cong 2^{3+3}{:}7$ extending to $\Sz(8)$ in $\mathbb M$.
It is shown in \cite{MSh,Meier} that every $2$-local subgroup of the Monster 
is contained in one of the known maximal subgroups. These papers do not however contain
the stronger assertion that every $2$-local subgroup of the Monster is contained in one of the
known $2$-local maximal subgroups. 
(That is, they classify $2$-local maximal subgroups, not maximal $2$-local subgroups.)
We therefore need first of all to consider the other
known maximal subgroups. A list of $43$ of the currently known $44$
classes of maximal subgroups can be found in Table 5.6
of \cite{FSG}: the subgroup $\PSL_2(41)$ found in \cite{L241}
was at that time thought not to exist.

\begin{lemma}\label{2locs} 
Every subgroup $B\cong 2^{3+3}{:}7$ of $\mathbb M$ lies in one of
the known $2$-local maximal subgroups.
\end{lemma}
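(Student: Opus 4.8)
The plan is to go through the $43$ classes of maximal subgroups listed in Table 5.6 of \cite{FSG}, plus the class $\PSL_2(41)$, and eliminate all of those which are not $2$-local, leaving only the known $2$-local maximal subgroups as possible overgroups of $B$. The key structural fact driving the argument is that $B\cong 2^{3+3}{:}7$ has order $2^6\cdot 7 = 448$, so any maximal subgroup $M$ containing $B$ must have order divisible by $2^6\cdot 7$; moreover $B$ is itself a $2$-local subgroup (it normalizes its own centre, or more usefully the normal subgroup $2^{3+3}$), and $B$ has a normal elementary abelian $2$-subgroup of rank $3$ on which the $7$-element acts fixed-point-freely.

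First I would discard on divisibility grounds every maximal subgroup whose order is not divisible by $2^6$ or not divisible by $7$; this removes a large fraction of the list immediately (for instance the various $\PSL_2(q)$ and $\PGL_2(q)$ with $q$ small, the almost-simple subgroups with Sylow $2$-subgroup of order less than $2^6$, and those of order coprime to $7$). Next, among the remaining almost-simple maximal subgroups — these are the ones with socle a simple group such as $\mathrm{Fi}_{24}'$, $B$ (the Baby Monster), $\mathrm{Th}$, $\He$, $\Suz$, $\mathrm{Fi}_{23}$, $\mathrm{HN}$, and so on — I would argue that a subgroup $2^{3+3}{:}7$ forces information about the $2$-local structure of that simple group that is either already known to be impossible, or shows that $B$ is contained in a proper $2$-local subgroup of that maximal subgroup, contradicting maximality in $\MM$ unless the relevant $2$-local subgroup is all of $M$. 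In practice one uses the {\sc Atlas} and the known classifications of $2$-local subgroups of these sporadic and classical groups. The upshot should be that no almost-simple maximal subgroup of $\MM$ contains $B$, so $B$ lies in a maximal subgroup $M$ with a non-trivial normal $2$-subgroup; such $M$ is $2$-local, and by \cite{MSh,Meier} it is one of the known $2$-local maximal subgroups — but here we want the cleaner statement that $M$ is itself $2$-local maximal, which follows because the known $2$-local maximal subgroups are precisely those normalizing a non-trivial $2$-subgroup.

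The main obstacle I expect is the handling of the largest almost-simple maximal subgroups, especially those containing large sporadic groups like $\mathrm{Fi}_{24}'$ and the Baby Monster, where a subgroup isomorphic to $2^{3+3}{:}7$ cannot be ruled out purely by order considerations and one must genuinely analyse their $2$-local subgroup structure (or the structure of $2^{3+3}{:}7$ inside them) to show it either does not embed or is contained in a proper $2$-local subgroup. A secondary subtlety is making sure the case distinction is exhaustive: one must be careful that the passage from "$M$ has a non-trivial normal $2$-subgroup" to "$M$ is a known $2$-local maximal subgroup" is legitimate, i.e.\ that the maximal $2$-local subgroups of the Monster coincide with the $2$-local maximal subgroups in the cases relevant here, which is exactly the point flagged in the surrounding text and is supplied by \cite{MSh,Meier}.
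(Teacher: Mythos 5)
Your overall plan (work through the known maximal subgroups and push $B$ into a $2$-local one) is the same in outline as the paper's, but the proposal has a genuine gap exactly where the content of the proof lies. The hard cases are not almost-simple maximal subgroups: the Monster has no almost-simple maximal subgroups with socle $\mathrm{Fi}_{24}'$, the Baby Monster, $\mathrm{Th}$, $\He$, $\Suz$ or $\mathrm{HN}$; these groups occur only as composition factors of \emph{local} maximal subgroups such as $2\udot\mathbb B$, $3\udot\Fi_{24}$, $S_3\times\mathrm{Th}$, $(D_{10}\times\mathrm{HN}).2$, $(7{:}3\times\He){:}2$, $3^8.\Or_8^-(3).2$, and your argument never actually treats the odd-local cases, which are the crux. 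Moreover, your intended conclusion for such cases is wrong in kind: you expect either that $B$ fails to embed, or that ``$B$ is contained in a proper $2$-local subgroup of that maximal subgroup, contradicting maximality in $\MM$'' --- but there is no such contradiction, and the paper does not (and need not) show that $B$ fails to embed in $\mathrm{HN}$ or $\mathrm{Th}$. What is required, and what the paper's proof supplies, is an explicit containment of the enveloping subgroup in a known $2$-local maximal subgroup of the Monster: in $\mathrm{HN}$ the relevant subgroup $2^3.2^2.2^6.\PSL_3(2)$ embeds in $2^3.2^6.2^{12}.2^{18}.\PSL_3(2)$; in $\mathrm{Th}$ either $B$ centralizes an involution (hence lies in $2\udot\mathbb B$ or $2^{1+24}\udot\Co_1$) or one descends to $2^5\udot\PSL_5(2)$ and repeats; $\Or_8^\pm(3)$ reduces to $\Or_7(3)$ or to $2^6A_8$, the latter placed inside $2^{10+16}\udot\Or_{10}^+(2)$ via triality; $3\udot\Fi_{24}'$ reduces to its $2$-locals, $\Fi_{23}$ and $\Or_{10}^-(2)$. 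Without containments of this kind your case analysis does not close, and ``the upshot should be'' is precisely the part that needs proving.

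Two secondary points. Your inference ``no almost-simple maximal subgroup contains $B$, so $B$ lies in a maximal subgroup with a non-trivial normal $2$-subgroup'' skips all the odd-local maximal subgroups as well as the non-local product-type ones (e.g.\ $(A_5\times A_{12}){:}2$, $(\PSL_2(11)\times\Mat_{12}){:}2$); the latter are easy, the former are not. And the role of \cite{MSh,Meier} is the opposite of what you suggest at the end: those papers guarantee that the $2$-local subgroup $N_{\MM}(2^{3+3})\geq B$ lies in some \emph{known} maximal subgroup (which is what legitimizes restricting attention to the known list in the first place), but they do \emph{not} supply the statement that it lies in a known $2$-local maximal subgroup --- that stronger statement is exactly what this lemma has to establish by the case-by-case reductions above, so it cannot be quoted from \cite{MSh,Meier}.
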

\begin{proof}
It is easy to see that $B$ cannot lie in any of the known non-local subgroups. Most of the
$p$-local subgroups for $p$ odd are easy to eliminate, and we quickly reduce to those whose
non-abelian composition factors are $\mathrm{HN}$, $\mathrm{Fi}_{24}'$, 
$\mathrm{Th}$, $\Or_8^+(3)$
or $\Or_8^-(3)$. In the case of $\mathrm{HN}$, the subgroup lies in $2^3.2^2.2^6.\PSL_3(2)$,
which embeds into $2^3.2^6.2^{12}.2^{18}.\PSL_3(2)$ in the Monster. In the case of $\mathrm{Th}$, 
it either centralizes an involution,
or lies in $2^5\udot\PSL_5(2)$, in which case it again either centralizes an involution or
lies in the subgroup  $2^3.2^6.2^{12}.2^{18}.\PSL_3(2)$.

The group $\Or_8^-(3)$ reduces to $\Or_7(3)$, which does not contain a subgroup
isomorphic to $B$. Similarly $\Or_8^+(3)$ reduces to either $\Or_7(3)$ or $\Or_8^+(2)$,
and the latter reduces to $2^6A_8$. Moreover, since a triality automorphism of $\Or_8^+(2)$
is realised in the Monster, this determines the group $2^6A_8$ up to conjugacy, and it is
easily seen to lie in $2^{10+16}\udot\Or_{10}^+(2)$.

We are left with $3\udot\mathrm{Fi}_{24}'$. 
By inspection, all $2$-local maximal subgroups thereof
lie inside $2$-local maximal subgroups of $\mathbb M$, which leaves $\mathrm{Fi}_{23}$
and $\Or_{10}^-(2)$ to consider. Similar arguments in these groups rapidly conclude
the proof.
\end{proof}
The next lemma is a restatement of Theorem~\ref{whereisB}.
\begin{lemma}\label{2locs} 
Every subgroup $B\cong 2^{3+3}{:}7$ of $\mathbb M$ lies in one of
the two maximal subgroups $M_1=2^{1+24}\Co_1$ or 
$M_2=2^3.2^6.2^{12}.2^{18}(\PSL_3(2)\times 3S_6)$.
\end{lemma}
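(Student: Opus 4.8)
The plan is to build on the previous lemma, which confines $B$ to one of the seven known $2$-local maximal subgroups of $\mathbb M$. Two of these are $M_1=2^{1+24}\udot\Co_1$ and $M_2$; the other five are $2\udot\mathbb B$, $2^2\udot{}^2E_6(2){:}S_3$, $2^{10+16}\udot\Or_{10}^+(2)$, $2^{5+10+20}\udot(S_3\times\PSL_5(2))$ and $2^{2+11+22}\udot(M_{24}\times S_3)$, and the task is to show that a copy of $B$ inside any one of these five in fact lies in a conjugate of $M_1$ or of $M_2$. The facts about $B\cong2^{3+3}{:}7$ that I would use throughout are as follows. Write $Q=O_2(B)\cong2^{3+3}$ and $E=Z(Q)=\Phi(Q)=[Q,Q]\cong2^3$; then $Q$ is a Sylow $2$-subgroup of $B$, it has exponent $4$, and its involutions are exactly the seven non-identity elements of $E$, which the element $t$ of order $7$ permutes in a single $7$-cycle. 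Hence every involution of $B$ lies in $E$, and these form one $\mathbb M$-class, $2A$ or $2B$. Since moreover $E$ and $Q/E$ are irreducible $\langle t\rangle$-modules, the only $\langle t\rangle$-invariant subgroups of $Q$ are $1$, $E$ and $Q$; consequently the only normal subgroups of $B$ are $1$, $E$, $Q$ and $B$, and in particular $Z(B)=1$.

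For a candidate $L=R\udot\bar L$ with $R=O_2(L)$, the subgroup $B\cap R$ is a normal $2$-subgroup of $B$, hence $1$, $E$ or $Q$. In the two cases $2\udot\mathbb B$ and $2^2\udot{}^2E_6(2){:}S_3$ one has $|R|<|Q|$, so $B\cap R=1$ and $B$ embeds in $\bar L$ outright; in general, whenever $B\cap R$ is $1$ or $E$ the group $B$, respectively $B/E\cong2^3{:}7$, embeds in $\bar L$, and one recurses on the known $2$-local subgroups of $\bar L$ --- which involves only $\mathbb B$, ${}^2E_6(2)$, $\Or_{10}^+(2)$, $\PSL_5(2)$ and $M_{24}$ --- keeping track of the analogous fixed-point-free action of the order-$7$ element. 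If $B\cap R=Q$, so that $Q\le R$, one instead studies the image $\bar t$ of $t$ in $\bar L$ and its action on $R$: because $Q$ is non-abelian of class two with centre and derived group $E\cong2^3$, it cannot lie in a normal $2$-subgroup of $L$ with sufficiently thin layers, and in the remaining three cases this pins $\bar t$ into a Singer-type class and $Q$ into a definite subgroup, from which one reads off that $B$ normalises, hence lies in, a conjugate of $M_2$ --- the only one of the seven whose bottom chief factor is a $2^3$ admitting a fixed-point-free element of order $7$ --- or else that $B$ centralises an involution.

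In the last event, and more generally whenever $B$ is seen to centralise an involution $z$ of $\mathbb M$, one has $B\le C_{\mathbb M}(z)$, which is $M_1$ if $z\in2B$; if $z\in2A$ then $z\notin B$ since $Z(B)=1$, so $B$ embeds in $\mathbb B$, and one completes the argument by the same descent through the known (and reasonably short) lists of $2$-local maximal subgroups of $\mathbb B$, $\mathrm{Fi}_{23}$, $\mathrm{Fi}_{22}$ and ${}^2E_6(2)$, verifying at each leaf that the relevant $2$-local sits inside a subgroup conjugate into $M_1$ or $M_2$ of $\mathbb M$. A convenient way to handle these leaves uniformly is to argue through $E$ rather than through $B$: since $E\trianglelefteq B$ we have $B\le N_{\mathbb M}(E)$, which is $2$-local and hence, by \cite{MSh,Meier}, lies in a known maximal subgroup of $\mathbb M$; and by the purity observed above $E$ is a $2A$-pure or $2B$-pure $2^3$ carrying a fixed-point-free automorphism of order $7$. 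There are only finitely many such subgroups up to conjugacy --- they can be enumerated from the known fusion and maximal-subgroup data of $\mathbb M$ --- and for each one finds that $N_{\mathbb M}(E)$ lies in a conjugate of $M_1$ (typically because $C_{\mathbb M}(E)\le M_1$, some involution of $E$ being central in that conjugate) or of $M_2$ (when $E$ is the bottom $2^3$ of a conjugate of $M_2$, on which $\PSL_3(2)$ and in particular its subgroup of order $7$ acts). This disposes of the cases $2^{10+16}\udot\Or_{10}^+(2)$, $2^{5+10+20}\udot(S_3\times\PSL_5(2))$ and $2^{2+11+22}\udot(M_{24}\times S_3)$ comparatively quickly.

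I expect the work to be dominated not by any single deep point but by the bookkeeping in the $2\udot\mathbb B$ and $2^2\udot{}^2E_6(2){:}S_3$ branches, where one must descend several layers --- through $\mathbb B$ and the Fischer groups, or through ${}^2E_6(2)$ and its classical overgroups --- before reaching a group small enough to settle by inspection, and the recursion has many leaves. A secondary obstacle is confirming, in auxiliary groups such as $\Or_{10}^+(2)$, $\PSL_5(2)$ and ${}^2E_6(2)$, either the non-existence of a subgroup isomorphic to $B$ or the exact location of the unique such subgroup; I would expect one or two of these checks to be carried out most cleanly by a short machine computation, with everything else following from character-table and subgroup-structure data.
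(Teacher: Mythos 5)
Your overall skeleton (run through the seven $2$-local maximal subgroups, intersect with $O_2$, recurse through $2\udot\mathbb B$ and $2^2\udot{}^2E_6(2){:}S_3$ using Borel--Tits) matches the paper in spirit, but the proposal leaves the genuinely hard cases unproved. For the three subgroups with large $O_2$, namely $2^{10+16}\udot\Or_{10}^+(2)$, $2^{5+10+20}\udot(S_3\times\PSL_5(2))$ and $2^{2+11+22}\udot(\Mat_{24}\times S_3)$, your argument is the sentence that the structure of $Q$ ``pins $\bar t$ into a Singer-type class and $Q$ into a definite subgroup, from which one reads off'' the conclusion --- that is an assertion, not a proof, and it is exactly where the content of the lemma lies. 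The paper disposes of two of these (and of $2^2\udot{}^2E_6(2){:}S_3$) at a stroke by an observation you never use: $B$ is generated by its elements of order $7$, so it lies in the normal subgroup of the ambient maximal subgroup generated by $7$-elements, and $2^{2+11+22}\udot\Mat_{24}$, $2^{5+10+20}\udot\PSL_5(2)$ and $2^2\udot{}^2E_6(2)$ are visibly contained in $M_1$, in $2^{10+16}\udot\Or_{10}^+(2)$ and in $2\udot\mathbb B$ respectively. Only the $\Or_{10}^+(2)$ case then needs a genuine argument, and the paper supplies a concrete one that has no counterpart in your plan: the fixed space of $Q$ on the $2^{10}$ is $B$-invariant and its radical is a $7$-module consisting of $2B$-involutions (singular vectors are $2B$, non-singular are $2A$); either this radical is an irreducible $3$-dimensional $7$-module, forcing $B\le M_2$, or it contains fixed vectors of the $7$-element, so $B$ centralizes a $2B$-involution and $B\le M_1$. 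You would need to produce something of this kind to close your ``remaining three cases''.

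Your proposed uniform shortcut via $E=Z(Q)$ also overreaches: you claim that for every pure $2^3$ subgroup $E$ admitting a fixed-point-free automorphism of order $7$ one finds $N_{\MM}(E)$ inside a conjugate of $M_1$ or $M_2$. That is stronger than the lemma (which only asserts $B\le M_1$ or $M_2$) and is not obviously true --- for instance, the $2^3$ of the paper's first example lies in a $2^5$ whose full normalizer is the maximal subgroup $2^{5+10+20}\udot(\PSL_5(2)\times S_3)$, so normalizers of such $2^3$'s need not land in $M_1$ or $M_2$ even when $B$ itself does. Moreover the enumeration of these pure $2^3$'s ``from known fusion and maximal-subgroup data'' that you invoke as a black box is essentially the substance of the later sections of the paper (the classification of $2^3{:}7$'s in $2^{1+24}\udot\Co_1$), so at this point of the argument it is not available to you; relying on it makes the plan circular rather than a proof of the present lemma.
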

\begin{proof}
Since $B$ is generated by elements of order $7$, we reduce in each case to the
normal subgroup of the relevant maximal subgroup, generated by the elements of order $7$.
This allows us to eliminate the case $2^2.2^{11}.2^{22}.\Mat_{24}$, which is contained in
$M_1$; and the case $2^2.{}^2E_6(2)$, which is contained in $2.\mathbb B$; 
and the case $2^5.2^{10}.2^{20}.\PSL_5(2)$, which is contained in
$2^{10}.2^{16}.\Or_{10}^+(2)$.

Now inside $2\udot\mathbb B$ we easily reduce to $2$-local maximal subgroups,
most of which are contained in one of the $2$-constrained maximal
subgroups of the Monster considered above. Everything else reduces to $2^2.{}^2E_6(2)$.
But 
modulo the centre this is a group of Lie type in characteristic $2$, whose maximal $2$-local
subgroups are given by the Borel--Tits theorem, 
and again lie in the $2$-constrained subgroups above.

Finally, we eliminate $2^{10}.2^{16}.\Or_{10}^+(2)$. The centralizer of the
action of $2^{3+3}$ on the $2^{10}$
orthogonal space must be a singular subspace, whose radical is acted on by the
element of order $7$. Now singular vectors are in class $2B$, while non-singular vectors are
in class $2A$. As a module for $7$, therefore, this radical is either irreducible
$3$-dimensional, in which case $B$ is contained in $M_2$, or contains fixed points,
in which case $B$ is contained in $M_1$.
\end{proof}

\section{Subgroups of $\Co_1$ isomorphic to $2^{3+3}{:}7$}
\label{Co1sub}
We begin with the $M_1$ case. As a first step,
in this section we prove Theorem~\ref{noBinCo1}, that $\Co_1$ does not contain a
subgroup isomorphic to $B$. We use the list of maximal subgroups given in \cite{FSG}, and more particularly the $2$-local maximal subgroups classified by Curtis \cite{Curtis}. Further information
about maximal subgroups is taken from the {\sc Atlas} \cite{Atlas}.
\begin{lemma}\label{C2a}
Any subgroup of $\Co_1$ isomorphic to the Borel subgroup of $\Sz(8)$ lies in
a conjugate of $2^{1+8}\udot\Or_8^+(2)$. Moreover, the subgroup $2^3{:}7$
is determined up to conjugacy.
\end{lemma}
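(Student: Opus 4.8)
The plan is to work through the list of maximal subgroups of $\Co_1$ and show that a copy of $B \cong 2^{3+3}{:}7$ can only be accommodated inside the $2$-local maximal subgroup $2^{1+8}_+\udot\Or_8^+(2)$. First I would observe that $B$ is generated by its elements of order $7$, so in any maximal overgroup $M$ of $B$ we may pass to the normal subgroup generated by such elements; this immediately kills all maximal subgroups of $\Co_1$ whose order is not divisible by $7$, and severely restricts the rest. Among the remaining candidates one checks the non-local maximals (such as $\Co_2$, $\Co_3$, $3\udot\Suz{:}2$, $2^{11}{:}\Mat_{24}$, $\Mat_{24}$-related groups, $(A_4\times\Or_8^+(2)){:}S_3$, etc.) and the other $2$-local maximals from Curtis's list; a group of shape $2^{3+3}{:}7$ needs a rather large normal $2$-subgroup with a very particular chief-factor structure for the cyclic $7$ acting fixed-point-freely on both the $2^3$ and the $2^3$, and most of these groups simply cannot supply it — typically either the Sylow $2$-subgroup is too small or the relevant $7$-element acts with fixed points on every available elementary abelian $2$-section. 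This forces $B$ into $2^{1+8}_+\udot\Or_8^+(2)$ (possibly via an intermediate subgroup like $2^{2+12}{:}(A_8\times S_3)$ or $2^{4+12}\udot(S_3\times 3S_6)$, which themselves lie inside it or are handled directly).

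Once $B$ is known to lie in $N := 2^{1+8}_+\udot\Or_8^+(2)$, I would analyse how $2^{3+3}{:}7$ sits relative to the extraspecial normal subgroup $Q = 2^{1+8}_+$. The centre $Z(Q)$ is a $2$-central involution, so $Z(Q) \le O_2(B) = 2^{3+3}$; write $\bar B$ for the image of $B$ in $N/Q \cong \Or_8^+(2)$ and consider the image of $O_2(B)$ in the $8$-dimensional orthogonal space $Q/Z(Q)$ over $\FF_2$. The $7$-element acts on this $8$-space, and since $7 \mid |\Or_8^+(2)|$ with the $7$-elements having rational trace on the natural module (an $8$-dimensional $\FF_2$-module decomposing under a $7$-cycle as a trivial summand plus a $7$-dimensional piece, or two $4$'s coming from the spin side — one sorts out which from the character), one pins down the possible images of the various chief factors of $2^{3+3}$ in $Q/Z(Q)$. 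Tracking the commutator and squaring maps (the orthogonal form on $Q/Z(Q)$ records squares in $Q$, and the symplectic form records commutators) constrains exactly how the subgroups $Z(Q)$, $O_2(B) \cap Q$, and $O_2(B)Q/Q$ interlock.

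Granting that, the subgroup $2^3{:}7$ of $B$ — a complement-type section with the $7$ acting fixed-point-freely and irreducibly on the $2^3$ — must project to a $2^3{:}7$ inside either $Q$ (impossible, $Q$ has exponent $4$ or its elementary abelian subgroups are $\Or_8^+(2)$-classified and too small to carry this together with $Z(Q)$ in the right way) or onto a $2^3{:}7 \le \Or_8^+(2)$ acting as a fixed-point-free $7$ on a nonsingular or totally singular $3$-space. The outer group $\Or_8^+(2)$ has, up to conjugacy and triality, very few classes of such $2^3{:}7$: essentially one in each triality image, all fused to a single class in $\Or_8^+(2){:}S_3$, and I would use the known subgroup structure of $\Or_8^+(2)$ (its maximal subgroups $2^6{:}A_8$, $(3\times \Or_6^+(2)){:}2 \sim A_8$-related, $2^{1+8}{:}(S_3\times S_3\times S_3)$, Weyl-group-type subgroups) together with triality to list the $2^3{:}7$'s: the $7$ lies in a unique class, its normalizer has the shape $(2^3{:}7){:}3$ or $2^3{:}7{:}6$, and in all cases the $2^3$ is determined. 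Combining this with the rigidity of how $Q$ meets $O_2(B)$ forces the claimed uniqueness of $2^3{:}7$ up to $\Co_1$-conjugacy.

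The main obstacle I expect is the elimination of the awkward $7$-containing maximal subgroups in the first step — particularly the large $2$-local ones like $2^{2+12}{:}(A_8\times S_3)$ and $2^{4+12}\udot(S_3\times 3S_6)$, and possibly $\Co_2$, $\Co_3$ — where one must show that no copy of the full $2^{3+3}{:}7$ (not just of $2^3{:}7$) fits; these groups do contain $2^3{:}7$, so the argument has to exploit the *second* level of the $2^{3+3}$ and the precise action of $7$ on it, rather than merely counting orders or invoking coprime action. Checking this cleanly — ideally with a fixed-point / module-theoretic argument rather than by machine — is where the real work lies; the subsequent analysis inside $2^{1+8}_+\udot\Or_8^+(2)$ and the triality classification of $2^3{:}7$ in $\Or_8^+(2)$ is comparatively standard, being essentially a finite check against the {\sc Atlas} data for $\Or_8^+(2)$.
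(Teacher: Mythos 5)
Your analysis of how $B$ sits relative to $Q=2^{1+8}$ goes wrong at two points, and the second one inverts the actual structure. First, the assertion that $Z(Q)\le O_2(B)$ is false: $B\cong 2^{3+3}{:}7$ is a Frobenius group, so the $7$-element acts fixed-point-freely on $O_2(B)\setminus\{1\}$, while $Z(Q)$ is central in $2^{1+8}\udot\Or_8^+(2)$ and hence centralized by that $7$-element; so $Z(Q)$ cannot lie in $O_2(B)$ (indeed it lies outside $B$ altogether). Second, and more seriously, you conclude that the normal $2^3$ of $B$ cannot lie in $Q$ and that the $2^3{:}7$ must embed in $\Or_8^+(2)$, to be classified there via triality. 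The truth is the opposite: since $\Or_8^+(2)$ contains no $2^{3+3}{:}7$, the intersection $B\cap Q$ is nontrivial, hence contains the unique minimal normal subgroup $2^3$ of $B$; and it cannot be all of $2^{3+3}$ because every subgroup of an extraspecial group has derived subgroup of order at most $2$, whereas $(2^{3+3})'=2^3$. So the elementary abelian $2^3$ of $B$ lies inside $Q$, avoids $Z(Q)$ (fixed-point-freeness again), and maps onto a totally isotropic $3$-space of the orthogonal $8$-space $Q/Z(Q)$ --- your parenthetical dismissal of this case is unfounded, as $2^{1+8}_+$ has elementary abelian subgroups of rank up to $5$ and such a $2^3$ exists. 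The uniqueness of the $2^3{:}7$ then comes from the transitivity of $\Or_8^+(2)$ on totally isotropic $3$-spaces and the structure $2^{3+6}{:}\PSL_3(2)$ of the stabilizer, not from a classification of $2^3{:}7$ subgroups of $\Or_8^+(2)$ itself; your route classifies the wrong object, and everything downstream in the paper (the identification inside the octad stabilizer of $2^{11}{:}\Mat_{24}$, the module analysis) depends on the $2^3$ being inside the extraspecial group.

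For the first half of the lemma, your reduction to the $2$-local maximal subgroups via generation by $7$-elements is fine in spirit, but the mechanism for dealing with the surviving $2$-locals is missing. The groups $2^{2+12}(A_8\times S_3)$ and $2^{11}{:}\Mat_{24}$ are maximal in $\Co_1$, so they do not ``lie inside'' $2^{1+8}\udot\Or_8^+(2)$, and you should not try to show outright that they contain no $2^{3+3}{:}7$ (for $2^{11}{:}\Mat_{24}$ that is true but is precisely the nontrivial content of the next lemma in the paper). The efficient argument is that any $B$ inside either of these groups centralizes an involution of $\Co_1$-class $2A$ (a nontrivial $B$-fixed point in the normal $2$-subgroup), and therefore lies in the $2A$-centralizer, which is exactly $2^{1+8}\udot\Or_8^+(2)$. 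Without that observation, the step you yourself flag as ``where the real work lies'' remains open in your proposal.
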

\begin{proof}
Inspection of the list of maximal subgroups of $\Co_1$, as well as maximal subgroups of
maximal subgroups, and so on as far as necessary, shows that any $2^{3+3}{:}7$ in
$\Co_1$ lies in one of the maximal $2$-local subgroups $2^{1+8}\udot\Or_8^+(2)$ or
$2^{2+12}(A_8\times S_3)$ or $2^{11}{:}\Mat_{24}$. But it is easy to see that in the latter two cases any such subgroup centralizes an involution of $\Co_1$-class $2A$, so reduces to 
the first case.

Now $\Or_8^+(2)$ does not contain $2^{3+3}{:}7$, so we must have a $2^3$ subgroup
of $2^{1+8}$. This corresponds to a totally isotropic $3$-space in the orthogonal $8$-space.
All such $3$-spaces are equivalent. Each such $3$-space has stabilizer $2^{3+6}{:}\PSL_3(2)$ in
$\Or_8^+(2)$, so up to conjugacy there is a unique $7$ normalizing it.
\end{proof}

Indeed, the full pre-image of this $3$-space stabilizer in $2^{1+8}\udot\Or_8^+(2)$ lies
inside the octad stabilizer in $2^{11}{:}\Mat_{24}$. 
Since the latter is a split extension, it is much easier to calculate in than the involution
centralizer itself. The $2^3$ itself consists of octads which are disjoint from the
fixed octad.

\begin{lemma}\label{notM24}
The subgroup $2^{11}{:}\Mat_{24}$ of $\Co_1$ does not contain a group isomorphic to $B$. 
\end{lemma}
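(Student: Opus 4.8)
The plan is to work inside the split extension $2^{11}{:}\Mat_{24}$ and show that no $2^3{:}7$ of the required shape extends to $B \cong 2^{3+3}{:}7$. By Lemma~\ref{C2a}, a copy of $B$ in $\Co_1$ has its bottom $2^3$ conjugate into $2^{1+8}$, and the full preimage of the $3$-space stabilizer sits inside the octad stabilizer in $2^{11}{:}\Mat_{24}$; moreover the remark after Lemma~\ref{C2a} identifies the $2^3$ concretely as three octads disjoint from a fixed octad $O$, permuted cyclically by the $7$-element. So I would fix an octad $O$, pick the $7$-element $g$ inside the octad stabilizer $2^4{:}A_8$ (acting as a $7$-cycle on $O$ and fixing the complementary $16$ points, hence inside the $A_8$ on $O$), and write down explicitly the $2^3 \le 2^{11}$ it must normalize: a rank-$3$ subspace of the Golay cocode (or code) on which $g$ acts fixed-point-freely. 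Since $g$ centralizes the $2A$-involution corresponding to $O$, any $B$ containing this $2^3{:}7$ would give a $2^{3+3}{:}7$ centralizing a $2A$-element of $\Co_1$ — but one must check this does not immediately contradict anything, so the real work is structural.

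The key computation is to determine $N_{2^{11}{:}\Mat_{24}}(2^3{:}7)$ and see whether its structure permits a further $2^3$ on top, normalized by $g$ and with the two $2^3$'s generating a non-abelian $2^{3+3}$ (so $g$ must act without fixed points on the quotient as well). First I would compute the normalizer in $\Mat_{24}$ of the chosen $\langle g\rangle$: the centralizer of a $7$-element in $\Mat_{24}$ is $\langle g\rangle$ itself of order $7$ (from $7{:}3$ in the class data) and its normalizer is $7{:}3$, acting on the eight fixed points outside $O$ and on $O$. Then the relevant $2^{11}$, as an $\FF_2[7{:}3]$-module, decomposes into pieces, and I would read off which $2^3$'s are $g$-invariant with no fixed points, and for each, whether the extension data (the action of $\Mat_{24}$-elements, i.e. the cocycle describing $2^{11}{:}\Mat_{24}$) allows a complement-like $2^3$ on top making the group non-abelian of the Suzuki-Borel shape. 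Because the Golay cocode module restricted to the $7$-cycle on an octad breaks as a trivial plus a $3+3$ (the two $3$-dimensional $\FF_8$-Galois-conjugate constituents), there is essentially one candidate $2^{3+3}$ to test, and I expect the conclusion to be that the needed $2^3$ on top is forced to contain $g$-fixed points, or that the commutator map $2^3 \times 2^3 \to 2^3$ coming from the $\Mat_{24}$-action is degenerate, contradicting $2^{3+3}$ having centre exactly $2^3$.

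Concretely, the steps in order: (1) fix $O$, $g$, and the $2^3$ of disjoint octads, citing the remark after Lemma~\ref{C2a}; (2) identify $N_{\Mat_{24}}(\langle g\rangle) = 7{:}3$ and its action on the $24$ points and on the Golay code/cocode; (3) decompose the relevant $2^{11}$ (or rather the $2^6$-section visible in the octad stabilizer) under $\langle g\rangle$ and locate all fixed-point-free $2^3$'s — there are very few; (4) for the surviving candidate(s), write the commutator form $2^3 \times 2^3 \to \FF_2$ (or into the centre) induced by the group multiplication in $2^{11}{:}\Mat_{24}$, using the known cocycle / the Golay quadratic form, and check it cannot be the non-degenerate alternating-type form needed for $2^{3+3}$ with centre $2^3$; (5) conclude that $B$ does not embed, hence neither does $B$ into $2^{11}{:}\Mat_{24}$. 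The main obstacle is step~(4): pinning down the commutator/cocycle precisely enough inside $2^{11}{:}\Mat_{24}$ to be sure the resulting central extension is the wrong one — this is where a short explicit calculation (or a careful appeal to the known structure of $2^{11}{:}\Mat_{24}$ and the Todd/Golay modules) is unavoidable, and it is the only genuinely delicate point, since the rest is bookkeeping with well-documented $\Mat_{24}$ data.
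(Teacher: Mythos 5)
Your overall strategy (work inside the split extension, restrict the $2^{11}$ to the $7$-element, pin the configuration down up to conjugacy, and finish with a structural computation) is the same in spirit as the paper's, but several of your specific claims are false and the decisive steps are missing. A $7$-element of $\Mat_{24}$ has cycle type $1^37^3$, so it cannot act as a $7$-cycle on the octad and fix the complementary $16$ points; correspondingly its centralizer in $\Mat_{24}$ has order $42$ and the normalizer of $\langle g\rangle$ has order $126$, not $7$ and $21$ as you assert. This is not just bookkeeping: your claimed restriction of the module to $\langle g\rangle$ (``a trivial plus a $3+3$'') is wrong. The relevant $2^{11}$ restricts to the $7$-element as $1a+1a+3a+3a+3b$, with two constituents from one Galois class and one from the other, and it is precisely the presence of these extra constituents, and the question of which of them can be glued under which (a $3b$ under a $3a$, a $1a$ under a $3b$, but never a $3a$ under anything), that has to be controlled before one can reduce to a manageable check; your proposal never confronts this.

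The second, more serious, gap is that you mislocate the crux. Searching for $g$-invariant fixed-point-free $2^3$'s inside $2^{11}$ only re-finds the bottom $2^3$, which is already determined by Lemma~\ref{C2a} and the remark following it. Since $2^{11}$ is elementary abelian, the top $2^3$ of a putative $2^{3+3}$ must map isomorphically onto a subgroup of $\Mat_{24}$ centralizing the bottom $2^3$, and the essential step is to identify this image $2^3{:}7$ in $\Mat_{24}$ up to conjugacy; the paper does this by noting that the top $2^3$, like the bottom, must be of type $3a$ for the $7$, which places everything inside the preimage of the involution centralizer $2^{1+6}{:}\PSL_3(2)$ and pins the $2^3{:}7$ down uniquely. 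Your step (4) then cannot be carried out as described: there is no ``cocycle describing $2^{11}{:}\Mat_{24}$'' (you yourself call the extension split), and the obstruction is not a quadratic or alternating form that can be declared degenerate; it lies in the module structure of $2^{11}$ under the chosen $2^3{:}7$ and in the possible choices of lifts. The paper settles this by quotienting out $1a+3b$ and making an explicit calculation in the resulting group $2^6{:}2^3{:}7$ to show it contains no $2^{3+3}{:}7$. Your ``I expect the conclusion to be\dots'' is exactly the statement that needs proof, so as written the argument has a genuine gap at its decisive step, in addition to the incorrect $\Mat_{24}$ data on which the earlier reductions rest.
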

\begin{proof}
The relevant subgroup of $2^{11}{:}\Mat_{24}$ is
$2^{11}{:}2^{1+6}\PSL_3(2)$, that is the preimage of the involution centralizer
in $\Mat_{24}$. This is contained in $2^{11}{:}2^4A_8$, in which the $2^{11}$ is a uniserial
modiule for $2^4A_8$, with factors $1+4+6$. As a module for the cyclic group of order $7$,
therefore, the $2^{11}$ has structure
$1a+1a+3a+3a+3b$, and the $2^3$ which corresponds to the isotropic $3$-space is
one of the copies of $3a$. It follows that in a putative $2^{3+3}{:}7$, the top $2^3$ is
also of type $3a$. This identifies the $2^3{:}7$ up to conjugacy in $2^{1+6}{:}\PSL_3(2)$.

The module structure of the $2^{11}$ for this group $2^3{:}7$ can now be calculated.
There can be gluing of a $3b$ under a $3a$, or of a $1a$ under a $3b$, but gluing a $3a$
under anything else is impossible. It follows that we can quotient by $1a+3b$, to get a
group $2^6{:}2^3{:}7$ in which all three $2^3$ chief factors are of type $3a$. 
A straightforward calculation now reveals that this group does not contain a copy of
the group $2^{3+3}{:}7$ we are seeking.
\end{proof}

This concludes the proof of Theorem~\ref{noBinCo1}.

\section{Pure $2^3$ subgroups of $2^{1+24}$}
\label{C2B}
By this stage we know that any embedding of $B$ in $M_1$ involves a $2^3$ in $2^{1+24}$,
and a quotient $2^3{:}7$ in $\Co_1$.
We next show that this forces the $7$-elements to be in $\mathbb M$-class $7A$
(corresponding to $\Co_1$-class $7B$).
\begin{lemma}\label{7A}
There is no $2^3{:}7$ in $\Co_1$ containing elements of $\Co_1$-class $7A$.
\end{lemma}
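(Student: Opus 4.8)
The plan is to suppose, for a contradiction, that $\Co_1$ contains a subgroup $W\cong 2^3{:}7$ whose element $g$ of order $7$ lies in class $7A$, and to exploit the small size of $C_{\Co_1}(g)$ together with the action of $W$ on the Leech lattice modulo~$2$.

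\emph{Step 1 (shape of $W$).} Since $\mathrm{GL}_3(2)$ has cyclic Sylow $7$-subgroups, $g$ acts on the normal subgroup $V:=2^3$ of $W$ either trivially or fixed-point-freely; the former would make $W$ abelian, so $g$ acts as multiplication by a generator of $\FF_8^{\times}$. Thus $W$ is a Frobenius group, $C_W(t)=V$ for every involution $t\in V$, and the seven involutions of $V$ form a single $g$-orbit, hence are pairwise $\Co_1$-conjugate, all of one class $2X$ with $X\in\{A,B,C\}$.

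\emph{Step 2 (the module data).} I would then bring in $\bar\Lambda:=\Lambda/2\Lambda\cong\FF_2^{24}$. From the frame shape $1^3 7^3$ of a $7A$-element on the Leech lattice, the fixed sublattice of $g$ has rank $6$, so $g$ fixes a $6$-space on $\bar\Lambda$ and $\bar\Lambda|_{\langle g\rangle}\cong 1^6\oplus 3a^3\oplus 3b^3$, where $3a,3b$ denote the two non-trivial $\FF_2$-irreducibles of $\mathbb Z/7$ (both of dimension $3$, since $2$ has order $3$ modulo $7$). On the other hand the fixed-space dimension $d_{2X}$ of an involution of class $2X$ on $\bar\Lambda$ is recorded in \cite{Atlas}. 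The point is that $\bar\Lambda$, as an $\FF_2[W]$-module, has all composition factors among $\{1,3a,3b\}$ (because $V=O_2(W)$ is a normal $2$-subgroup and $\FF_2$ has characteristic $2$), with multiplicities forced by the $\langle g\rangle$-restriction above; I would analyse the possible $\FF_2[W]$-submodule structures, and in particular the $g$-invariant subspace $C_{\bar\Lambda}(V)=\bigcap_{t}C_{\bar\Lambda}(t)$, whose dimension is simultaneously constrained by the $\langle g\rangle$-decomposition and by the common value $d_{2X}$, and show that these constraints cannot be met.

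\emph{Step 3 (a local cross-check).} As a complementary route one can note that $W$, being $2$-local, lies in a maximal $2$-local subgroup of $\Co_1$ of order divisible by $7$; by Curtis's classification \cite{Curtis} these are $2^{1+8}\udot\Or_8^+(2)$, $2^{2+12}(A_8\times S_3)$ and $2^{11}{:}\Mat_{24}$. In the last two cases $W$ centralises a $2A$-involution and so reduces to the first (as in the proof of Lemma~\ref{C2a}); inside $2^{1+8}\udot\Or_8^+(2)$ one checks that either $V$ is contained in $2^{1+8}$, where it is a totally isotropic $3$-space (all equivalent) normalised by the $7$ identified in Lemma~\ref{C2a}, or $V$ maps isomorphically into $\Or_8^+(2)$; in all cases the resulting $7$-element lies in $\Co_1$-class $7B$ --- equivalently, every $7$-element of $2^{1+8}\udot\Or_8^+(2)$ lies in $7B$.

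The main obstacle is precisely the assertion, underpinning both routes, that a $7A$-element cannot normalise any elementary abelian $2^3$ whose involutions are all conjugate: it is genuinely easy to produce \emph{some} copy of $2^3{:}7$ in $\Co_1$ (for instance $2^3{:}7\le S_8\le\mathrm{Sp}_6(2)\le\Or_8^+(2)\le\Co_1$), so Lemma~\ref{C2a} --- which concerns a $2^3{:}7$ sitting inside a $2^{3+3}{:}7$ --- does not apply directly, and one must actually pin down the $\Co_1$-fusion of the relevant $7$-elements. The difficulty is that passing to $C_{\Co_1}(g)\cong 7\times A_7$ by itself is not decisive, since $V\cap C_{\Co_1}(g)=1$ and the Sylow $2$-subgroup $D_8$ of $A_7$ carries no obvious obstruction; the contradiction has to come from the finer module-theoretic information of Step~2 (or from a direct computation of $7$-element fusion from the three maximal $2$-locals into $\Co_1$).
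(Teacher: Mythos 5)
Your proposal does not close the argument, and its central piece of data is the wrong way round. You assign to the class $7A$ the frame shape $1^37^3$, a rank-$6$ fixed sublattice, and the restriction $1^6\oplus 3a^3\oplus 3b^3$ on $\Lambda/2\Lambda$. That is the $7B$ data: it is exactly the decomposition the paper uses for the $7B$-normalizer $(7{:}3\times\PSL_3(2)){:}2$ in Lemma~\ref{lem:inCo1}. The class $7A$ of $\Co_1$ (centralizer $7\times A_7$) acts \emph{fixed-point-freely} on the Leech lattice (each primitive $7$th root of unity occurs with multiplicity $4$; trace $-4$), and this is precisely the fact the paper's proof turns on. With your mislabelled data, Step~2 cannot possibly yield a contradiction: genuine Frobenius subgroups $2^3{:}7$ of $\Co_1$ with $7$-part in class $7B$ do exist (the rest of the paper is about them), and they satisfy every module-theoretic constraint you propose to test, so the analysis you sketch would terminate without contradiction. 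Moreover, even on its own terms the proposal is a plan rather than a proof: Step~2 ends with ``show that these constraints cannot be met'' without doing so, and the decisive assertion of Step~3 (that every $7$-element of $2^{1+8}\udot\Or_8^+(2)$ lies in $7B$) is stated, not proved --- as your final paragraph concedes.

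For comparison, the paper's proof is two lines long and uses the fact you inverted: a subgroup $2^3{:}7$ of $\Co_1$ lifts to a subgroup $2^3{:}7$ of $2\udot\Co_1$ (the preimage $2^4$ of the $2^3$ is forced to be elementary abelian and splits off a $7$-invariant complement), and this lift acts faithfully on the $24$-dimensional space; since the $2^3$ acts non-trivially, at least one $7$-dimensional irreducible of the Frobenius group $2^3{:}7$ occurs, and on it the $7$-element (acting regularly) has a non-zero fixed vector, contradicting the fixed-point-freeness of $7A$. Your Step~3 is in fact salvageable --- it is true that $C_{\Co_1}(2A)=2^{1+8}\udot\Or_8^+(2)$ contains no $7A$-elements, because $C_{2\udot\Co_1}(g)=7\times 2\udot A_7$ for a lift $g$ of a $7A$-element contains a unique involution (the centre), so every involution commuting with a $7A$-element lifts to an element of order $4$ and lies in class $2B$ --- but proving that statement again comes down to the fixed-point-free property, and you would additionally have to justify, for a bare $2^3{:}7$ rather than the full $2^{3+3}{:}7$ of Lemma~\ref{C2a}, the reduction from $2^{11}{:}\Mat_{24}$ and $2^{2+12}(A_8\times S_3)$ to the $2A$-centralizer. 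As written, the key fixed-point datum is wrong and the contradiction is never derived.
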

\begin{proof} The $7A$-elements in $\Co_1$ are fixed-point-free
in the action of $2\udot \Co_1$ on the Leech lattice. If they lie in $2^3{:}7$ in $\Co_1$,
then this lifts to $2^3{:}7$ in $2\udot\Co_1$, acting faithfully on the Leech lattice.
But then the element of order $7$ would have a fixed point, which is a contradiction.
This concludes the proof. 
\end{proof}

\begin{lemma}\label{lem:inCo1}
 There are exactly
three conjugacy classes of $2^3{:}7$  in $2^{1+24}\Co_1$ that have 
the properties that the $2^3$ lies in $2^{1+24}$ and the $7$-element
lies in $\Co_1$-class $7B$. Their
centralizers in $2^{1+24}\Co_1$ are respectively 
\begin{enumerate}
\item $2^{1+6}S_4$, 
\item $2^{1+6}.7$, and 
\item $2^{1+6}.2^2$.
\end{enumerate}
\end{lemma}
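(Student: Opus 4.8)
The plan is to work entirely inside the extraspecial normal subgroup $Q = 2^{1+24}$ of $M_1$, exploiting the quadratic-form structure on $Q/Z(Q) \cong 2^{24}$ (the Leech lattice mod $2$). By Lemma~\ref{7A}, the $7$-element must lie in $\Co_1$-class $7B$, and by Lemma~\ref{lem:inCo1}'s hypotheses the $2^3$ sits inside $Q$. First I would fix a $7B$-element $t \in \Co_1$ and analyse its action on the $24$-dimensional $\FF_2$-module: a $7B$-element has a nontrivial fixed space, and I would determine the module structure as a sum of the trivial module and copies of the two nontrivial irreducible $\FF_2[7]$-modules of dimension $3$ (the two $3$-dimensional Galois-conjugate constituents, call them $3a$ and $3b$), keeping track of how the quadratic form restricts to each constituent. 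A $2^3$ normalized by $t$ and lying in $Q$ corresponds to a $t$-invariant $3$-dimensional subspace of $Q/Z(Q)$ that is either in the fixed space or is one of the $3a/3b$ constituents; the extension $2^3{:}7$ being nonsplit-free (i.e. actually realising the Frobenius group) restricts which of these occur.

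Next I would enumerate the $t$-invariant totally-isotropic-or-not $3$-spaces: for each candidate $U \leq Q/Z(Q)$ with $t$ acting irreducibly (or trivially) on it, lift to a subgroup $2^3 \leq Q$ — here one must check that the preimage of $U$ in $Q$ splits over $Z(Q)$, which happens exactly when $U$ is totally singular with respect to the form (otherwise the preimage is $2^{1+3}$ or has exponent $4$). Combining the constraint "$U$ totally singular" with "$U$ is $t$-invariant of the right isomorphism type" should cut the list down to exactly three orbits under $C_{\Co_1}(t) \rtimes$ (or rather under $N_{M_1}(\langle t\rangle)$ acting appropriately). For each of the three surviving $(2^3, t)$ pairs, I would then compute the centralizer in $M_1 = Q\udot\Co_1$ by a two-step procedure: first $C_Q(2^3\langle t\rangle)$, which is the preimage in $Q$ of the fixed space of $\langle t\rangle$ acting on $C_{Q/Z}(U)$ — this should come out as $2^{1+6}$ in all three cases (a $6$-dimensional fixed space carrying a nondegenerate form, extended by the centre); and second the image of the centralizer in $\Co_1$, i.e. $C_{C_{\Co_1}(t)}(\text{action on } U)$, which is the part that differs between the three cases and produces the quotients $S_4$, $7$, and $2^2$ respectively. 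Then I would assemble $C_{M_1}(2^3{:}7)$ from these pieces, checking the relevant extension does not collapse (no unexpected fusion), to read off $2^{1+6}S_4$, $2^{1+6}.7$, $2^{1+6}.2^2$.

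The main obstacle I expect is the bookkeeping in the second step: correctly identifying which $t$-invariant $3$-spaces are totally singular and pairing this with the isomorphism-type constraint so that \emph{exactly} three classes survive — it is easy to over- or under-count here, since $C_{\Co_1}(7B) \cong 7 \times A_7$ (or a closely related group) acts on the relevant configuration and one must be careful about the difference between $A_7$-orbits and orbits under the full normalizer $(7{:}3 \times A_7)$, and about whether the two Galois-conjugate $3$-dimensional constituents give genuinely distinct classes or get swapped by an outer element. Computing the three centralizer quotients $S_4$, $7$, $2^2$ is then a matter of identifying point/line/plane-type stabilizers inside this $A_7$ (or a subgroup thereof) acting on the configuration of $3$-spaces, which is the kind of small explicit calculation that may be most cleanly done by a short machine computation, as the paper signals it is willing to do. Once the three $(2^3{:}7)$-classes and their $C_Q$-parts $2^{1+6}$ are pinned down, combining them with the three distinct $\Co_1$-side centralizers to get the stated answers is routine.
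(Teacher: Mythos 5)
Your overall skeleton does match the paper's: fix a $7B$-element, decompose $Q/Z(Q)$ as an $\FF_2[7]$-module, observe that a Frobenius $2^3{:}7$ forces the $2^3$ to map to a $7$-irreducible (hence non-fixed) $3$-subspace, check singularity so that it lifts to an elementary abelian $2^3$ in $Q$, then classify such subspaces up to the action of the $7$-normalizer and compute centralizers in two steps ($C_Q$ giving $2^{1+6}$, plus a quotient in $\Co_1$). But the decisive step --- showing that there are \emph{exactly three} classes and extracting the quotients $S_4$, $7$, $2^2$ --- is only promised (``should cut the list down to exactly three orbits''), not carried out, and the group you propose to act with is misidentified: $C_{\Co_1}(7B)\cong 7\times\PSL_3(2)$, with normalizer $(7{:}3\times\PSL_3(2)){:}2$; the group $7\times A_7$ you name is the $7A$-centralizer, and $7A$ is exactly the class already excluded by Lemma~\ref{7A} (it is fixed-point-free on the Leech lattice, so admits no $2^3{:}7$ of the required kind). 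Working with $A_7$ and ``point/line/plane-type stabilizers'' inside it would give neither the correct module decomposition nor the stabilizers $S_4$, $7$, $2^2$, so the conclusion of the lemma cannot be reached along that route.

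What completes the argument in the paper is a concrete device your proposal lacks. The restriction of the $2^{24}$ to $7{:}3\times\PSL_3(2)$ is $1\otimes 3a+1\otimes 3b+3a\otimes 3a+3b\otimes 3b$; the outer half of the normalizer swaps $3a$ and $3b$, so one may place the $2^3$ in the $9$-dimensional homogeneous piece $3a\otimes 3a$. Interpreting the $7$-element as scalar multiplication by a generator of $\FF_8^{\times}$ turns this piece into $\FF_8^3$ with $\PSL_3(2)$ acting $\FF_8$-linearly, and the $7$-invariant irreducible $3$-subspaces become precisely the $(8^3-1)/(8-1)=73$ projective points. The classification is then a finite orbit computation: $\PSL_3(2)$ has orbits of lengths $7$, $24$, $42$ on these $73$ points, with point stabilizers $S_4$, $7$, $2^2$, which combined with $C_Q(7)=2^{1+6}$ yield the three stated centralizers. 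Incidentally, the singularity check you flag as your main worry is essentially automatic in this setup: the bilinear form pairs the $3a$- and $3b$-homogeneous parts, so $3a\otimes 3a$ is totally isotropic, and invariance of the quadratic form under the $7$-action forces it to vanish identically there; hence all $73$ subspaces lift. Without the correct identification of the $7B$-normalizer and the $\FF_8$ reformulation (or some equivalent explicit enumeration), your proposal does not establish either the count of three or the centralizer structures.
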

\begin{proof}
First, the $7B$-normalizer in $\Co_1$ is $(7{:}3\times \PSL_3(2)){:}2$, in which 
the two factors $7{:}3$ and $\PSL_3(2)$ both have two $3$-dimensional representations,
which we will denote $3a$ and $3b$. Then the representation of
$7{:}3\times \PSL_3(2)$ on the $2^{24}$ is
$$1\otimes 3a+1\otimes 3b+ 3a\otimes 3a+3b\otimes 3b.$$
Since the outer half of the $7$-normalizer swaps $3a$ with $3b$, we may assume
that our $2^3$ lies in the $3a\otimes 3a$ part of the representation.

Now we may interpret our $7$-element as a scalar in the field $\mathbb F_8$
of order $8$, so that $3a\otimes 3a$ becomes a $3$-space over $\mathbb F_8$.
Then we  classify the orbits of $\PSL_3(2)$ on the $(8^3-1)/(8-1)=73$
one-dimensional subspaces of this $3$-space. This is a straightforward calculation, and we
find that the orbit lengths are $7$, $24$, and $42$. Thus there are exactly
three conjugacy classes of $2^3{:}7$ of this kind in $2^{1+24}\Co_1$, with
centralizers respectively $2^{1+6}S_4$, $2^{1+6}.7$, and $2^{1+6}.2^2$.
\end{proof}

\section{Examples}
\label{examples}
The $2B$-elements in $2^{1+24}$, modulo the central involution,
correspond to crosses in the Leech lattice, that is congruence classes modulo $2$
of lattice vectors of type $4$. The $2^3$ subgroups described in Lemma~\ref{lem:inCo1}
can therefore be described by representative vectors of three such classes.
We use the octonionic notation of \cite{octoLeech} for the Leech lattice, and explicit
generators for the Conway group given in \cite{octoConway}. In particular, we take
the $7$-element to rotate the imaginary units as $i_t\mapsto i_{t+1}$, with subscripts
read modulo $7$, and the $\PSL_3(2)$ to be generated modulo the central involution
of $2\udot\Co_1$ by the sign-changes and permutations on the three octonionic coordinates,
together with the matrix 
$$g_1=\begin{pmatrix}0&\overline{s}&\overline{s}\cr s&-1&1\cr s&1&-1\end{pmatrix}$$
acting by right-multiplication on row vectors.

Now if $\PSL_3(2)$ acts in the usual way on $\mathbb F_2^2$, and $\eta$ is a root of
$x^3+x+1$ modulo $2$, then the three orbits on $1$-spaces have representatives
respectively $(1,0,0)$, $(1,\eta,0)$ and $(1,\eta,\eta^2)$, giving orbit lengths
$7$, $42$ and $24$ respectively. This can be translated directly into the above
situation, and enables us to write down representatives for the three orbits of
$2^3{:}7$ described in Lemma~\ref{lem:inCo1}.

\begin{example}
In the first case, the $2^3$ is centralized by an $S_4$ in the $\PSL_3(2)$, and 
this $S_4$ belongs to the so-called Suzuki chain of subgroups, and centralizes $A_8$.
The resulting subgroup $S_4\times A_8$ lies in the stabilizer of a trio of three disjoint
octads. 
We may take the $7B$-element to cycle the imaginary units $i_0,i_1,\ldots, i_6$
in the obvious way, and the $2^3$ to consist of the crosses defined by the vector
$2(-1+i_0+i_1+i_3,0,0)$ and its images under the $7$-cycle. 

Adjoining the central involution of $2^{1+24}$ and the cross defined by $(4,0,0)$
gives a copy of the $2^5$ with normalizer $2^5.2^{10}.2^{20}.(\PSL_5(2)\times S_3)$. 
In particular, any copy of $B$ containing this $2^3{:}7$ also lies in $M_2$.

By applying the matrix $g_1$ we obtain a spanning set for the $3$-space over $\mathbb F_8$.
A second basis vector may be taken modulo $2$ to be
$(-2-i_0+i_3+i_5+i_6)(1,1,0)$.
\end{example}
\begin{example}
In the second case, the $2^3$ is centralized by an element of order $7$. This is necessarily of
$\Co_1$ class $7B$, so can be conjugated to the element of class $7B$ described in the 
previous example. This element centralizes a $2^{1+6}$ in $2^{1+24}$, which is acted on
by a group $\PSL_3(2)$ which identifies the two invariant $2^3$ subgroups. We can take either
of them, since they are interchanged by an automorphism which inverts the $7B$-element.

With the same notation as above, we find that an example is generated by
the congruence classes of 
$(4,0,0)$ and $2(\overline{s},1,\pm1)$,
and images under 
permutations of
the three octads.
\end{example}
\begin{example}\label{ex3}
We make the third example directly by translating $(1,\eta,0)$ into octonionic language,
so that it is again normalized by the canonical element of order $7$.
It can be generated by the
images of the congruence class of 
the vector 
$(-2-i_0+i_3+i_5+i_6,2i_4+i_0+i_3-i_5+i_6,0)$.
\end{example}

\section{Identifying the $2^2$ subgroups}
\label{4groups}
It is well-known \cite{MSh}
that there are three
classes of $2^2$ of pure $2B$-type in the Monster, with the following properties
with respect to the centralizer $2^{1+24}\Co_1$ of any one of its involutions.
\begin{itemize}
\item[(a)] Contained in the normal subgroup $2^{1+24}$, so having
centralizer of the shape $(2\times 2^{1+22}).2^{11}\Mat_{24}$.
\item[(b)] Mapping onto an element of $\Co_1$-class $2A$, whose centralizer is $\Co_1$
has shape
$2^{1+8}\udot \Or_8^+(2)$. The centralizer of this $2^2$-group in the
Monster is however only $(2^9\times 2^{1+6}).2^{1+8}.2^6A_8$.
\item[(c)] Mapping onto an element of $\Co_1$-class $2C$, which has centralizer
$2^{11}\Mat_{12}.2$ in $\Co_1$. The centralizer of this $2^2$ in the Monster is
$2^{12}.2^{11}.\Mat_{12}.2$.
\end{itemize}

It is also proved in \cite{MSh}, and is in any case a straightforward calculation,
that all three of these $2B^2$ subgroups are represented in $2^{1+24}$ modulo
its centre, and that there is a unique conjugacy class in each case.
In standard notation, if one of the involutions is taken to be the congruence class of $(8,0^{23})$, then the
other is the congruence class of either $(4^4,0^{20})$ or $(2^8,4^2,0^{14})$ or
$(2^{12},4,0^{11})$. These are of type (a), (b), (c) respectively.
Examples in octonionic notation are $(4,0,0)$ with respectively $2(1+i_0+i_1+i_3,0,0)$ or
$2(\overline{s},1,1)$ or $(1+i_0)(s-2,s,s)$.
From this it is immediate that in the first two cases in Lemma~\ref{lem:inCo1}
the $2^2$-subgroups are respectively
of type (a) and (b). 
A small calculation establishes that in the third case they are of type (c).
As this calculation is somewhat tricky to carry out accurately, we give a sketch here.

\begin{lemma}\label{typec}
The $2^3$ of type (3)  in Lemma~\ref{lem:inCo1} contains $2^2$ subgroups of type (c).
\end{lemma}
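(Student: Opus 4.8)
The plan is to take the explicit octonionic vector given in Example~\ref{ex3} for the third orbit, generate the $2^3$ by applying the canonical $7$-element (cycling the imaginary units $i_t \mapsto i_{t+1}$), and then examine the seven non-trivial congruence classes it contains. Each such class is a Leech lattice vector of type $4$, i.e.\ a cross, and the task is to identify which of the three Monster classes (a), (b), (c) of pure $2B^2$ the pairwise products fall into. Concretely, I would compute, for the starting vector $v=(-2-i_0+i_3+i_5+i_6,\,2i_4+i_0+i_3-i_5+i_6,\,0)$ and its image $v'$ under the $7$-cycle, a lattice vector congruent to $v+v'$ modulo $2$ of minimal norm, and read off its shape.

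First I would fix normalisations: recall from Section~\ref{examples} that $(4,0,0)$ has shape $(8,0^{23})$ up to the octonionic-to-standard dictionary of \cite{octoLeech}, that $2(1+i_0+i_1+i_3,0,0)$ has shape $(4^4,0^{20})$, that $2(\overline{s},1,1)$ has shape $(2^8,4^2,0^{14})$, and that $(1+i_0)(s-2,s,s)$ has shape $(2^{12},4,0^{11})$; these are the canonical representatives of types (a), (b), (c) respectively. Then, working modulo $2\Lambda$, I would reduce $v+v'$ using the glue vectors and the known coordinate structure until it is in one of these three shapes. Since all seven pairs $\{v^{(t)}, v^{(t')}\}$ are equivalent under the $7$-cycle, it suffices to do this for a single pair; the answer must then be (c). A sanity check is available from Lemma~\ref{lem:inCo1}: the centralizer in case (3) is $2^{1+6}.2^2$, and the centralizer in the Monster of a type-(c) $2^2$ is $2^{12}.2^{11}.\Mat_{12}.2$, so one can verify that $2^{1+6}.2^2$ embeds compatibly, whereas the type-(a) and type-(b) centralizers $(2\times 2^{1+22}).2^{11}\Mat_{24}$ and $(2^9\times 2^{1+6}).2^{1+8}.2^6A_8$ are incompatible with the $7$-action on the third orbit.

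The main obstacle is purely computational bookkeeping: the octonionic coordinates carry signs $s=\tfrac12(-1+i_0+\cdots+i_6)$ and its conjugate $\overline s$, and reducing a sum like $v+v'$ modulo $2\Lambda$ to canonical shape requires care with the precise form of the Leech lattice in the octonionic model of \cite{octoLeech}. The risk is an arithmetic slip in the reduction; the paper itself flags this by saying the calculation is ``somewhat tricky to carry out accurately,'' so I would double-check by computing the norm of the reduced vector (it must be $4$, i.e.\ the type-$4$ condition) and by confirming that the $2^2 = \langle (4,0,0), v+v'\rangle$ generated is pure $2B$ — every non-trivial element being a cross — which rules out any accidental landing on a type-$2A$ or mixed configuration. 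Once the shape $(2^{12},4,0^{11})$ is confirmed for the pair, the lemma follows immediately.
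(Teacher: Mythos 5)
There is a genuine gap: you are computing the wrong invariant. The type (a)/(b)/(c) of a pure $2B$ four-group inside $2^{1+24}$ is an invariant of the \emph{relative position} of the two crosses involved: the shapes $(4^4,0^{20})$, $(2^8,4^2,0^{14})$, $(2^{12},4,0^{11})$ classify the second congruence class only \emph{after} the first involution of the pair has been conjugated to the standard frame $(8,0^{23})$ (octonionically $(4,0,0)$). Your plan is to reduce $v+v'$ modulo $2\Lambda$ in the original coordinates and read off its shape; but that shape measures the position of the class $[v+v']$ relative to the standard coordinate frame $[(4,0,0)]$, i.e.\ it would identify the type of the four-group generated by $x_{(4,0,0)}$ and $x_{v+v'}$ --- a group that is not even contained in the $2^3$ of type (3), since $[(4,0,0)]$ does not lie in that $2^3$ (it does in case (1), which is exactly why that case is type (a)). Your own phrase ``the $2^2=\langle (4,0,0), v+v'\rangle$'' makes this misidentification explicit: the four-group to be classified is $\langle x_v,x_{v'}\rangle$, whose nontrivial classes are $[v]$, $[v']$, $[v+v']$. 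Reduction modulo $2\Lambda$ does not change congruence classes and so cannot substitute for what is actually needed, namely conjugation by elements of the Conway group carrying $[v]$ to the standard frame; this is precisely what the paper does, multiplying coordinates by imaginary units and applying an explicit matrix until the first vector becomes $(0,0,4)$, and only then identifying the image of the second vector as lying in a class of type (c) relative to that frame.

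The proposed ``sanity check'' does not rescue the argument: $2^{1+6}.2^2$ in Lemma~\ref{lem:inCo1} is the centralizer of the whole $2^3{:}7$, not of a $2^2$, and it is a group of order $2^9$ which embeds without difficulty into the centralizers of all three types (a), (b), (c), so no incompatibility with types (a) or (b) follows from order or shape considerations alone. Likewise, checking that the reduced vector has norm $4$ only confirms $2B$-purity, which is already known from Lemma~\ref{lem:inCo1}; it does not distinguish the three types. To repair the proof you must either follow the paper's route (transform the pair by explicit elements of $\Co_0$ so that one class becomes the standard frame, then read off the shape of the other), or compute a genuinely frame-independent invariant of the pair $\{[v],[v']\}$ such as the multiset of inner products between the $48$ minimal vectors of one class and those of the other; the shape of $v+v'$ in the ambient coordinates is neither.
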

\begin{proof}
Let us take the example given in Lemma~\ref{ex3}
above, spanned by the congruence classes of the vectors
\begin{eqnarray*}
&&(-2-i_0+i_3+i_5+i_6,2i_4+i_0+i_3-i_5+i_6,0)\cr
&&(-2-i_1+i_4+i_6+i_0,2i_5+i_1+i_4-i_6+i_0,0).
\end{eqnarray*}
We aim to apply elements of the Conway group which map the first vector to 
a vector in the congruence class of $(4,0,0)$. First multiply the second and third coordinates
by $i_4$, then $i_6$, then $i_5$, then $i_1$ to get
\begin{eqnarray*}
&&(-2-i_0+i_3+i_5+i_6,-2-i_0+i_3+i_5+i_6,0)\cr
&&(-2-i_1+i_4+i_6+i_0,-2i_0-1+i_2-i_3+i_5,0).
\end{eqnarray*}
Now we can apply the matrix 
$$\frac12\begin{pmatrix}-1&1&s\cr 1&-1&s\cr \overline{s}&\overline{s}&0\end{pmatrix}$$
to obtain
\begin{eqnarray*}
&&-2(0,0,i_0+i_3+i_5+i_6)\cr
&&\frac12(1-3i_0+i_1+i_2-i_3-i_4+i_5-i_6,\cr
&&\qquad-1+3i_0-i_1-i_2+i_3+i_4-i_5+i_6,\cr
&&\qquad1-i_0-i_1-i_2-i_3-i_4-i_5-5i_6)
\end{eqnarray*}
We may, although this is not strictly necessary, tidy this up a little
by multiplying the second and third coordinates
by $i_0$ and then $i_1$, to obtain
\begin{eqnarray*}
&&2(0,0,1+i_1-i_2+i_4)\cr
&&\frac12(1-3i_0+i_1+i_2-i_3-i_4+i_5-i_6,\cr
&&\qquad-1-i_0-3i_1-i_2-i_3-i_4+i_5+i_6,\cr
&&\qquad1-i_0+i_1-i_2+i_3+5i_4+i_5-i_6)
\end{eqnarray*}
and finally multiply by $(1-i_1)$ and then $(1+i_2)/2$ to obtain
\begin{eqnarray*}
&&(0,0,4)\cr
&&(-i_0+i_2+i_3-i_6,-1-i_0-i_2-i_4,2-i_1-i_2+i_3+i_4)
\end{eqnarray*}
It is readily checked that this last vector lies in the Leech lattice, and that these two
congruence classes determine a $2B^2$ subgroup of type (c) in the Monster.
\end{proof}
\section{Eliminating the second and third cases}
In these two cases we show that there is no embedding of $B$ in $M_1$.
\begin{lemma}
The group $2^3{:}7$ of type (2) considered in Lemma~\ref{lem:inCo1}
cannot occur in a 
copy of $B$ in the Monster.
\end{lemma}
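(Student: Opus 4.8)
The plan is to exploit the structure of case (2), where the $2^3$ is centralized by an element of order $7$. In the group $B \cong 2^{3+3}{:}7$, the $7$-element acts fixed-point-freely on the bottom $2^3$ and on the quotient $2^3$ (these are the natural module and another copy of it, or a Frobenius-twist thereof). First I would observe that the $2^3$ of type (2) has centralizer $2^{1+6}.7$ in $M_1 = 2^{1+24}\Co_1$, so the putative extra $2^3$ sitting above it inside a copy of $B$ must be visible inside a group of order dividing $|2^{1+6}.7| \cdot 2^3$ or so — more precisely, the full $2^{3+3}$ lies in the preimage of $C_{\Co_1}(2^3{:}7)$ together with $2^{1+24}$. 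The key point is that the extraspecial group $2^{1+6}$ appearing in the centralizer is acted on by the $7$-element, and $2^{1+6}$ has no room for a complement isomorphic to $2^3$ on which $7$ acts without fixed points in the required way unless the commutator relations of $B$ can be realized.

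Concretely, the second key step is to pin down the action: inside $2^{1+24}$, the chosen $2^3$ (type (2)) together with the $7$-element sits inside a $2^{1+24}$-section on which $7$ acts, and the candidate top $2^3$ would have to map isomorphically (via the quotient $2^{1+24}/Z \to \Co_1$-image, or rather stay inside $2^{1+24}$) — but by Lemma~\ref{lem:inCo1} there are only three classes of such $2^3$, and the top $2^3$ of any $B$ must again be one of those three. So I would check the commutator: in $B$, if $x$ generates the bottom $2^3$-coset and $y$ the top, then $[x,y]$ lies in the bottom $2^3$ and the map $y \mapsto [x,y]$ realizes the bottom $2^3$ as (a twist of) the natural module for $7$ acting on the top $2^3$. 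Translating this into the octonionic coordinates of Example~2, one computes the relevant commutators of crosses in $2^{1+24}$ — which is just the alternating form on the Leech lattice mod $2$ — and checks whether the bilinear pairing between the type-(2) $2^3$ and any conjugate type-(2) $2^3$ (the only candidate for the top, by the $7$-action constraint) can be nondegenerate with the right $7$-equivariance.

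The third step is the contradiction: the squaring map and the commutator pairing on $2^{1+24}$ are both $\Co_1$-invariant (up to the central involution), and restricted to the $2^{1+6}$ centralizing the $7$-element, the form has a specific rank. I expect that the pairing between the type-(2) $2^3$ and its natural "partner" $2^3$ (interchanged by the inverting automorphism mentioned in Example~2) is either identically zero — in which case the two $2^3$'s commute and one gets $2^6$ rather than $2^{3+3}$, so no copy of $B$ — or has a kernel incompatible with $7$-equivariance. Either way, the relations of $2^{3+3}{:}7$ cannot be satisfied. I would verify this by a direct computation of a few commutators $[u_i, v_j]$ of the explicit generating crosses and their images under the $7$-cycle, exactly as in the proof of Lemma~\ref{typec}, and read off that the resulting $3\times 3$ matrix over $\mathbb F_2$ is not of the form forced by an $\FF_8$-scalar $7$-action on a $2^{3+3}$.

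The main obstacle will be the bookkeeping: getting the octonionic commutator calculation right, since — as the authors themselves note for Lemma~\ref{typec} — these computations are "somewhat tricky to carry out accurately." The conceptual content is light (it is ultimately a statement about a $7$-equivariant bilinear form on a $3$-dimensional $\FF_8$-space being forced to vanish), but isolating the correct pair of $2^3$'s to pair against each other, and tracking signs through multiplications by imaginary units $i_t$, is where care is needed. A cleaner alternative, which I would try first to avoid the octonions, is to work entirely inside $C_{M_1}(2^3{:}7)/\langle z\rangle = 2^6.7 / \cdots$ and argue abstractly that a $7$-invariant extension $2^{3+3}$ with the $7$-element acting as an $\FF_8$-scalar on both layers forces the commutator map to be an $\FF_8$-bilinear alternating form on $\FF_8^3$, hence degenerate (odd dimension), contradicting the faithfulness/non-splitting required for $B$.
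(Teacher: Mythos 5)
Your approach founders on a structural misconception about how $B$ can sit inside $M_1$. In $B\cong 2^{3+3}{:}7$ the lower $2^3$ is the centre of the Sylow $2$-subgroup $Q=2^{3+3}$, so commutators between the lower $2^3$ and anything in $Q$ vanish identically; the pairing you propose to test, $y\mapsto [x,y]$ with $x$ in the bottom layer and $y$ in the top, is zero and carries no information. What has to be realized is the commutator map from the top layer with itself onto the bottom $2^3$. Moreover, the top layer cannot lie in $2^{1+24}$ at all: $Q'=Z(Q)\cong 2^3$, whereas every subgroup of the extraspecial group $2^{1+24}$ has derived group of order at most $2$. Hence $B\cap 2^{1+24}$ is exactly the lower $2^3$, the rest of $Q$ maps onto a $2^3$ in $\Co_1$, and pairing two subgroups of $2^{1+24}$ via the Leech-lattice-mod-$2$ alternating form (which takes values only in the central $\langle z\rangle$) can never produce the required $2^3$-valued commutators. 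So the octonionic computation you outline tests the wrong configuration, and Lemma~\ref{lem:inCo1} does not constrain the top layer in the way you assume ("a conjugate type-(2) $2^3$" is not the candidate for the top).

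The ``cleaner alternative'' is also unsound: the commutator map of the Suzuki $2$-group is only $\FF_2$-bilinear, equivariant with respect to a twisted (Tits) semilinear action, not $\FF_8$-bilinear, so it is not forced to be an alternating $\FF_8$-form and is in fact nondegenerate. If that abstract argument were valid it would show that no $2^{3+3}{:}7$ with the $7$ acting irreducibly on both layers exists at all, contradicting the existence of the Borel subgroup of $\Sz(8)$ itself and the paper's finding that the type (1) class does extend to a copy of $B$; any correct argument must use what is special about type (2). The paper's proof does exactly that, and is much shorter: the type (2) group $2^3{:}7$ has an extra centralizing element of order $7$, so if it extended to $B$, then inside $\Co_1$ the image of $B$ (which is $2^3{:}7$, since $B\cap 2^{1+24}$ is the bottom $2^3$) together with this extra $7$ would generate a subgroup of order divisible by $7^2$ containing $2^3{:}7$. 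The only maximal subgroups of $\Co_1$ of order divisible by $7^2$ are $7^2{:}(3\times 2A_4)$ and $(A_7\times \PSL_3(2)){:}2$, and neither contains $2^3{:}7$, a contradiction. That local argument is not recoverable from, and is quite different in kind from, your proposed commutator computation.
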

\begin{proof}
The second type of $2^3$
has normaliser with order divisible by $7^2$, and lying in
$2^{1+24}\udot\Co_1$. Now the only maximal subgroups of $\Co_1$ whose order is
divisible by $7^2$ are $7^2{:}(3\times 2A_4)$ and $(A_7\times \PSL_3(2)){:}2$.
Since neither of these groups contains $2^3{:}7$, the group in question cannot
extend to $2^{3+3}{:}7$. 
\end{proof}

\begin{lemma}\label{lem:not22c1}
The $2^3{:}7$ subgroup of type (3) in Lemma~\ref{lem:inCo1}
cannot occur in a copy of $B$ in the Monster.
\end{lemma}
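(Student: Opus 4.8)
Suppose for contradiction that some $B\cong 2^{3+3}{:}7$ in $\mathbb M$ contains the type-(3) group $2^3{:}7$ of Lemma~\ref{lem:inCo1}. Write $B=Q{:}\langle c_7\rangle$ with $Q\cong 2^{3+3}$ the Sylow $2$-subgroup and $c_7$ a $7$-element. Since $Q$ is a Suzuki $2$-group, $Z(Q)=[Q,Q]=\Phi(Q)$ is the unique normal subgroup of $B$ of order $8$, so the given $2^3$ must be $Z(Q)$ and the given $7$-element $c_7$, up to conjugacy. The plan is to push everything into $M_1$ and argue along the lines of the proof of Lemma~\ref{notM24}. First I would gather the information already available: by Lemma~\ref{7A} and the remarks after it, $c_7$ is of $\mathbb M$-class $7A$ and $\Co_1$-class $7B$; by Lemma~\ref{C2a} the image $B/Z(Q)\cong 2^3{:}7$ is, up to conjugacy, the unique such subgroup of $\Co_1$, sitting in $2^{1+8}\udot\Or_8^+(2)$ with its $2^3$ totally isotropic; and by Lemma~\ref{typec} every $2^2\le Z(Q)$ is of type (c), so $C_{\mathbb M}(Z(Q))$ — an intersection of seven copies of $2^{12}.2^{11}.\Mat_{12}.2$ — has order prime to $7$. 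Hence any such $B$ lies in the full preimage $\widetilde P=2^{1+24}.(2^3{:}7)\le M_1$, with $B\cap 2^{1+24}=Z(Q)$ the fixed type-(3) subgroup — an $\mathbb F_8$-line in the orbit-$42$ class of Lemma~\ref{lem:inCo1} — and $B/Z(Q)$ the fixed $2^3{:}7$.

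The core is a module computation. I would work out the structure of $2^{1+24}$, and of the relevant $O_2$ of $\widetilde P$, as a module for $2^3{:}7=B/Z(Q)$, using the octonionic coordinates of \cite{octoLeech,octoConway} and the explicit vectors of Example~\ref{ex3}, and — crucially — keeping in view the symplectic and quadratic form on $2^{24}$, since this is exactly the datum that distinguishes an elementary abelian $2^6$, or a direct product $2^3\times 2^3$, from a genuine Suzuki $2$-group $2^{3+3}$ (in which $[Q,Q]=\Phi(Q)=Z(Q)$ and every element outside $Z(Q)$ has order $4$). Constructing $Q$ on top of the fixed $Z(Q)$ then amounts to adjoining above $Z(Q)$, inside $O_2(\widetilde P)$, a chief factor of the type forced by the Suzuki relations, with the correct cohomology class and the correct behaviour with respect to the form. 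One then checks — in the spirit of Lemma~\ref{notM24}, where ``gluing a $3a$ under anything else is impossible'' — that for the orbit-$42$ class no such extension exists, so $\widetilde P$, hence $\mathbb M$, contains no $2^{3+3}{:}7$ with centre the type-(3) $2^3$; this contradicts the existence of $B$.

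The hard part will be doing this $\mathbb F_2[2^3{:}7]$-module bookkeeping accurately: one needs the submodule lattice of $2^{1+24}$ under $2^3{:}7$ together with, for each relevant section, whether it is nondegenerate or totally singular for the form, and a clean criterion in those terms for a $3$-dimensional submodule to be $[Q,Q]$ of a Suzuki $2$-group normalised by the Singer $7$-cycle $\langle c_7\rangle$. An alternative, which I expect to be of comparable difficulty, is to identify $N_{\mathbb M}(Z(Q))$ directly — it is a $2$-local subgroup whose order is divisible by $7$ only to the first power, with $N_{\mathbb M}(Z(Q))/C_{\mathbb M}(Z(Q))\le\mathrm{GL}_3(2)$ forcing $c_7$ onto a Singer cycle of the quotient — and then to rule out a $2^{3+3}{:}7$ with centre $Z(Q)$ inside it; but naming $N_{\mathbb M}(Z(Q))$ requires knowing the $\mathbb M$-class of this $2^3$, which is essentially the $M_2$-flavoured work appearing later, so I would favour the direct computation in $M_1$.
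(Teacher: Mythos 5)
Your argument has a genuine gap: the entire content of the lemma is delegated to the sentence ``one then checks \dots\ that for the orbit-$42$ class no such extension exists.'' That check is precisely what has to be proved, and you neither carry it out nor indicate why it should come out negatively; you yourself flag the $\mathbb F_2[2^3{:}7]$-module bookkeeping (with the quadratic form, the cohomology class of the putative $2^{3+3}$, and the possible lifts of the top $2^3$) as ``the hard part.'' As it stands this is a plan for a computation, not a proof. Two subsidiary steps are also shaky: the claim that $C_{\mathbb M}(Z(Q))$ has order prime to $7$ is asserted without justification (and is not needed, since $B\le M_1$ with $B\cap 2^{1+24}=Z(Q)$ is already the standing hypothesis of this part of the paper); and your appeal to Lemma~\ref{C2a} to pin down the image $B/Z(Q)$ in $\Co_1$ is a misapplication, because that lemma concerns the bottom $2^3{:}7$ of a full Borel subgroup $2^{3+3}{:}7$ lying inside $\Co_1$ --- a configuration which Theorem~\ref{noBinCo1} shows does not exist --- whereas here only a quotient $2^3{:}7$ of $B$ lands in $\Co_1$, and its conjugacy class (equivalently, which $2^3$'s above the fixed $\mathbb F_8$-point can occur as $Q/Z(Q)$) is exactly one of the unknowns your computation would have to control; so even the group $\widetilde P$ in which you propose to compute is not pinned down.

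For comparison, the paper avoids any module or cohomology computation. By Lemma~\ref{typec} the $2^2$'s inside the type-(3) $2^3$ are of Monster-type (c), with centralizer $(2^2\times 2^{1+20})\udot\Mat_{12}\udot 2$ lying entirely inside $2^{1+24}\udot\Co_1$. Since $Q=2^{3+3}$ centralizes such a $2^2$, its image $Q/Z(Q)$ is a pure $2^3$ in $\Co_1$ lying (modulo the relevant normal $2$-group) in $\Mat_{12}{:}2$, and the known fusion of $\Mat_{12}{:}2$-classes into $\Co_1$ ($2A$ and $2C$ to $\Co_1$-class $2B$, $\Mat_{12}$-class $2B$ to $\Co_1$-class $2A$), together with the nonexistence of a pure $2B$-type $2^3$ in $\Co_1$ and of a pure $2B$-type $2^3$ in $\Mat_{12}$, yields an immediate contradiction. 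If you want to salvage your approach you would have to actually perform and document the extension analysis inside $2^{1+24}$, including the determination of the admissible images of $B$ in $\Co_1$; the fusion argument is considerably shorter.
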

\begin{proof}
The third type of $2B$-pure $2^2$ 
has centralizer $(2^2 \times 2^{1+20})\udot \Mat_{12}\udot 2$, which lies
entirely within $2^{1+24}\udot\Co_1$.
Again, the subgroup $2^{3+3}{:}7$ of our putative $\Sz(8)$
projects onto 
a subgroup $2^3{:}7$ of $\Co_1$. Moreover, the normal $2^{3+3}$ is 
in the centralizer of our $2^2$ and projects to a
pure $2^3$ subgroup of $\Co_1$. Since this $2^3$
lies in $\Mat_{12}.2$, we need to look at the embedding of $\Mat_{12}{:}2$ in $\Co_1$.
We have that the classes $2A$ and $2C$ in $\Mat_{12}{:}2$ fuse to $\Co_1$-class $2B$,
while $\Mat_{12}$-class $2B$ fuses to $\Co_1$-class $2A$. But there is no pure $2^3$ of
$\Co_1$-class $2B$, and no pure $2^3$ of $\Mat_{12}$-class $2B$. Therefore this case
cannot arise.
\end{proof}

\section{Eliminating the first case}
\label{M24type}
In this case we adopt a different strategy, and show that any subgroup of $\mathbb M$
which is generated by a $2^3{:}7$ of this type and an involution which inverts
an element of order $7$ therein has non-trivial centralizer. Since $\Sz(8)$ can be
generated in this way, and it is already known that every $\Sz(8)$ in $\mathbb M$ has
trivial centralizer, this proves that this $2^3{:}7$ cannot lie in $\Sz(8)$.

Before we prove this, we show that the $M_2$ case also reduces to this case.
\begin{lemma}\label{lem:notinN23}
Every copy of the group $B\cong2^{3+3}{:}7$  in $M_2$
contains the socle of $M_2$.
\end{lemma}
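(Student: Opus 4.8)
The goal is to show that any $B \cong 2^{3+3}{:}7$ sitting inside $M_2 = 2^3.2^6.2^{12}.2^{18}.(\PSL_3(2) \times 3S_6)$ must contain the socle $N := 2^3$ of $M_2$ (the bottom $2^3$ in the displayed shape, which is normal in $M_2$). The plan is to analyse the image of $B$ in the quotient $\overline{M_2} := M_2 / 2^3 \cong 2^6.2^{12}.2^{18}.(\PSL_3(2) \times 3S_6)$ and show it is too small to accommodate $2^{3+3}{:}7$ unless $B \cap N \neq 1$; and then, since $N$ is elementary abelian of order $8$ and the only proper nontrivial normal subgroups of $B$ visible at the bottom are again $2^3$'s, conclude $B \supseteq N$.

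\textbf{Key steps.} First I would record that $B$ is generated by its elements of order $7$, so the image $\overline{B}$ of $B$ in $\overline{M_2}$ is likewise $7$-generated; in particular $\overline B$ surjects onto a nontrivial $7$-subgroup of $\PSL_3(2) \times 3S_6$, and since a Sylow $7$-subgroup of that quotient has order $7$, the $7$-part of $\overline B$ in the Levi quotient is exactly a single $\mathbb F_8^\times$. Second, I would identify the $2$-local structure: the unipotent radical $U = 2^3.2^6.2^{12}.2^{18}$ of $M_2$ is a module for $\PSL_3(2) \times 3S_6$ (indeed its layers are the familiar $3$-dimensional and tensor-product modules over $\mathbb F_2$), so as a module for the chosen $7$-element it decomposes into copies of the trivial module $1a$ and the two $3$-dimensional modules $3a, 3b$. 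The subgroup $2^{3+3}$ of $B$ lies in $U$, and its bottom $2^3$ (the one centralised by the $7$) together with the requirement that the $7$ acts fixed-point-freely on the upper $2^3$ pins down which chief factors of $U$ the two $2^3$'s can occupy. Third, I would argue by the same gluing analysis used in Lemma~\ref{notM24}: a $3a$ chief factor of $B$ inside $U$ cannot be glued beneath anything except possibly a $3b$ or a $1a$, so after quotienting out the factors that cannot carry the action one is left with a small controlled quotient in which the three required $2^3$ chief factors of $2^{3+3}{:}7$ simply do not fit unless the bottom one is exactly $N$. Finally, translating back, $B \cap N$ is a $7$-invariant, hence $\PSL_3(2)$-relevant subspace of $N \cong 3a$; it is nonzero by the counting just done, and since $3a$ is irreducible for the $7$-element, $B \cap N = N$, i.e.\ $B \supseteq \mathrm{socle}(M_2)$.

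\textbf{Main obstacle.} The delicate point is the precise module structure of the unipotent radical $U$ of $M_2$ as an $\mathbb F_2[\PSL_3(2) \times 3S_6]$-module, and in particular which extensions (gluings) between the $1a$, $3a$, $3b$ chief factors actually occur in $M_2$ as realised in the Monster, since the abstract group shape $2^3.2^6.2^{12}.2^{18}$ does not by itself determine these. I expect to handle this either by citing the explicit description of $M_2$ coming from the $2$-local geometry of the Monster, or by a short direct computation analogous to the one in the proof of Lemma~\ref{notM24}; the counting argument that no copy of $2^{3+3}{:}7$ avoids $N$ is then routine once the gluing data is in hand. The remaining steps — the $7$-generation, the fixed-point-freeness of $7$ on the top $2^3$, and the irreducibility of $N$ as a $7$-module — are immediate.
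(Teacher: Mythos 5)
There is a genuine gap, and it is structural: you assert that the bottom $2^3$ of $2^{3+3}$ (that is, $B''$, the centre of $O_2(B)$) is the one \emph{centralised} by the element of order $7$. This is false for the Borel subgroup of $\Sz(8)$: $B$ is a Frobenius group of order $2^6\cdot 7$, so the $7$-element acts fixed-point-freely on all of $O_2(B)=2^{3+3}$, and both $B''$ and $B'/B''$ are irreducible $3$-dimensional $\mathbb{F}_2$-modules for the $7$ (the commutator map makes $B''$ a quotient of $\Lambda^2(B'/B'')$, so the two modules are in fact dual to one another; neither is trivial). Since your ``pinning down'' of which chief factors of $O_2(M_2)$ can carry the two $2^3$'s is driven by this false premise, the plan derails at its first real step: a $7$-trivial $2^3$ would naturally be located in the $1\otimes 6$ layer of $O_2(M_2)$, not in the socle $3a\otimes 1$, so the intended conclusion would not come out. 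A smaller slip pointing the same way: $2^{3+3}{:}7$ has two $2^3$ chief factors, not three.

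The paper's own proof is a pure composition-factor comparison and needs no gluing analysis at all: it lists the chief factors of $O_2(M_2)$ under $\PSL_3(2)\times 3S_6$ as $3a\otimes 1$, $1\otimes 6$, $3b\otimes 4$, $3a\otimes 6$, notes that $3a$ and $3b$ remain distinct on restriction to the subgroup of order $7$, and then compares this list with the precise $7$-module types of the two $3$-dimensional sections $B''$ and $B'/B''$ of $B$; the whole content of the lemma is that this comparison forces $B''$ to be the socle. So the crux is exactly the structural fact about the $7$-action on $2^{3+3}$ that your proposal gets wrong, while the extension (gluing) data of $O_2(M_2)$ — which you flag as your main obstacle and leave unresolved, to be imported by analogy with Lemma~\ref{notM24} — is not what the argument turns on. To repair the proposal you would have to replace the fixed-point claim by the correct identification of the two (nontrivial, mutually dual) $7$-modules on $B''$ and $B'/B''$ and then redo the layer-by-layer comparison against the four chief factors above; as written, your argument does not establish the lemma.
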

\begin{proof}
If we label the two $3$-dimensional representations of $\PSL_3(2)$ as
$3a$ and $3b$, and label other representations by their degrees, then
the representations of $\PSL_3(2)\times 3S_6$ on the chief factors of $N(2^3)$ are
respectively $3a\otimes 1$, $1\otimes 6$, $3b\otimes 4$, and $3a\otimes 6$.
Now $3a$ and $3b$ remain distinct on restriction to the subgroup of order $7$.
But in $B\cong 2^{3+3}{:}7$, the $3$-dimensional representations of the group of order
$7$ on $B''$ and $B'/B''$ are the same, and this can only 
occur in $M_2$ 
in the case when
$B$ contains the socle.
\end{proof}

\begin{lemma}\label{lem:not22a}
The $2^3{:}7$ subgroup of type (1) in Lemma~\ref{lem:inCo1}
cannot occur in a copy of $\Sz(8)$ in the Monster.
\end{lemma}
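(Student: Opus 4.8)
The strategy announced at the start of this section is to show that any group generated by this particular $2^3{:}7$ together with an involution $t$ inverting a $7$-element has non-trivial centralizer; since $\Sz(8)$ is generated by its Borel subgroup $B$ together with an involution inverting the $7$-element in $B$, and since it is already known that every $\Sz(8)$ in $\mathbb M$ has trivial centralizer, this contradiction will eliminate the case. So the task reduces to locating all such inverting involutions $t$ and checking that in every instance $C_{\mathbb M}(\langle 2^3{:}7, t\rangle) \neq 1$.

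\textbf{Step 1: pin down where $t$ lives.} From Example~1, this $2^3{:}7$ of type (1) is arranged so that the $2^3$ together with the central involution of $2^{1+24}$ and the cross defined by $(4,0,0)$ generates a $2^5$ whose normalizer in $\mathbb M$ is $2^5.2^{10}.2^{20}.(\PSL_5(2)\times S_3)$, and the $S_4$ centralizing the $2^3$ lies in the Suzuki chain and centralizes an $A_8$, with $S_4\times A_8$ inside a trio stabilizer. The first thing I would do is identify, inside this very explicit configuration, the centralizer of the subgroup $2^3{:}7$ itself — in $\Co_1$ its centralizer (of the image) is $2^{1+6}S_4$ by Lemma~\ref{lem:inCo1}, so in $M_1$ the relevant centralizer has shape roughly $2^{1+6}\udot S_4$ after allowing for the central $2^{1+24}$; and an inverting involution $t$ must normalize $2^3{:}7$ and invert the $7$, so $t$ lies in the normalizer $N_{\mathbb M}(2^3{:}7)$, acting on the $2^3$ and swapping the two non-trivial characters of the $7$-action appropriately.

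\textbf{Step 2: exploit the Held group.} The section's introduction flags ``a small computation of orbits in the Held group.'' The group $2^3{:}7$ with its normalizing involution sits naturally inside $\He$ (the Held group has $2^3{:}\PSL_3(2)$ and $2^6{:}3.S_6$-type local structure, and $7{:}3\times \PSL_3(2)$-type normalizers appear throughout the Monster's $7$-local geometry via $\He \le \mathbb M$, indeed $7{:}(3\times \He){:}2$ is involved). I would transfer the question to $\He$: classify the orbits of the relevant stabilizer on the involutions $t$ that normalize a fixed $2^3{:}7$ and invert the $7$. A short orbit computation there should show every such $t$, when combined with $2^3{:}7$, generates a subgroup with a non-trivial fixed vector or fixed lattice sublattice — equivalently non-trivial centralizer in $\mathbb M$. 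Concretely: the $2^3{:}7$ and $t$ together will generate a subgroup of $\PSL_2(8)$-type or $2^3{:}7{:}2 \cong \mathrm{AGL}_1(8)$, and one checks it fixes points in the $196883$-dimensional module (or Leech-type module of the relevant involution centralizer), forcing a non-trivial centralizing involution.

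\textbf{The main obstacle.} The hard part will be Step 1: making the identification of $t$ explicit enough, and controlling the ambient group tightly enough, that the orbit count in Step 2 is genuinely finite and small. The difficulty is that a priori $t$ could lie outside any single known maximal subgroup, so one must first argue — using the $2^5 \le 2^5.2^{10}.2^{20}.(\PSL_5(2)\times S_3)$ configuration from Example~1 and Lemma~\ref{lem:notinN23}, which already forces everything containing this $2^3{:}7$ into $M_2$ as well — that $\langle 2^3{:}7, t\rangle$ is constrained to lie in a computable over-group (some extension of $\PSL_3(2)$-by-$3S_6$, or the $\He$-involving $7$-local), where the orbits of candidate $t$'s under the centralizing $S_4$ (or its normalizer) are few. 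Once the over-group is nailed down, verifying $C_{\mathbb M}(\langle 2^3{:}7,t\rangle)\neq 1$ in each orbit is a routine fixed-space check; it is the a-priori localization of $t$ that carries the real content.
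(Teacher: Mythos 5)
Your overall plan (show that $\langle 2^3{:}7,\,t\rangle$ has non-trivial centralizer for every involution $t$ inverting the $7$-element, then invoke the known fact that any $\Sz(8)$ in $\MM$ has trivial centralizer) is indeed the paper's, but your execution contains a genuine error that breaks it. You assert in Step 1 that an inverting involution $t$ ``must normalize $2^3{:}7$'', so that $t\in N_{\MM}(2^3{:}7)$. This is false and, worse, it excludes exactly the configurations at issue: in $\Sz(8)$ itself the subgroup $2^3{:}7$ (centre of a Sylow $2$-subgroup extended by a torus) is self-normalizing, and none of its elements inverts the $7$, so the involutions of $\Sz(8)$ that invert the $7$-element do \emph{not} normalize the $2^3{:}7$. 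Your consequent ``concrete'' claim that $\langle 2^3{:}7,t\rangle$ is of $\PSL_2(8)$- or $\mathrm{AGL}_1(8)$-type begs the question -- the whole point is that this group might be $\Sz(8)$, in which case a fixed-vector check on the $196883$-dimensional module would show nothing.

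The step you flag as the ``main obstacle'' -- confining $\langle 2^3{:}7,t\rangle$ to a computable overgroup -- is neither solved in your proposal nor needed in the paper's proof, which never localizes the generated group at all. The only localization used is automatic: since $t$ inverts the $7$-element, $\langle 7,t\rangle$ is one of the $266560$ subgroups $D_{14}$ containing that element inside the invertilizer $(7\times\He){:}2$. The other key ingredient, which you misidentify, is the \emph{full Monster} centralizer of the type-(1) $2^3{:}7$: it is $2^6{:}3S_6$, visible in $M_2$, not merely the $2^{1+6}S_4$-shaped group seen inside $M_1$. The actual computation is that $2^6{:}3S_6$, acting by conjugation on those $266560$ subgroups $D_{14}$, has no regular orbit; hence for every candidate $t$ there is a non-trivial element centralizing $2^3{:}7$ and normalizing $\langle 7,t\rangle$, which therefore normalizes -- and, if the generated group is $\Sz(8)$, centralizes -- $\langle 2^3{:}7,t\rangle$, whatever that group turns out to be. Without the no-regular-orbit mechanism, and with the incorrect restriction on $t$, your argument does not cover the case it was designed to eliminate.
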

\begin{proof}
In this case the $2^3{:}7$  has centralizer $2^6{:}3S_6$, visible in $M_2$.
The $7$-element extends to exactly 266560
groups $D_{14}$ inside the invertilizer $(7\times \He){:}2$. It is easy to
calculate (using a suitable computer algebra package such as 
GAP \cite{GAP}) 
the orbits of $2^6{:}3S_6$ on these 266560 points, and to observe that there is
no regular orbit. (This permutation representation was taken from \cite{webatlas}.)
Now $\Sz(8)$ can be generated by subgroups $2^3{:}7$ and $D_{14}$ intersecting
in $7$.
It follows that if $\Sz(8)$ is generated by one of these
amalgams, with this particular $2^3{:}7$,
then it is centralized by a non-trivial element.
This is a contradiction. 
\end{proof}

This concludes the proof of Theorem~\ref{noSz8}.


\end{document}